\mathchardef\isinpunto="0010
\newcommand{\abs}[1]{\lvert#1\rvert}
\def\bbbr{{\rm I\!R}}
\theoremstyle{plain}
\newtheorem{proposition}{\bf Proposition}
\newtheorem{theorem}{\bf Theorem}
\newtheorem{corollary}{\bf Corollary}
\theoremstyle{definition}
\newtheorem{definition}{\bf Definition}
\newtheorem{example}{\bf Example}
\newtheorem{remark}{\bf Remark}
\title{\baselineskip .2in
\bf Measuring leadership and productivity in an organisational structure}
\begin{document}

\setlength{\abovedisplayskip}{5mm} %7
\setlength{\abovedisplayshortskip}{3mm} %5
\setlength{\belowdisplayskip}{6mm} %9
\setlength{\belowdisplayshortskip}{5mm} %8

\maketitle \vspace*{-1cm} {\baselineskip .2in
\begin{center}
{\small
\begin{tabular}{l}
\bf Ram\'on Flores
\\ Instituto de Matem\'aticas de la Universidad de Sevilla (IMUS), Departamento de Geometr\'{i}a y Topolog\'{i}a,
\\ Universidad de Sevilla,  Spain
\\ e-mail: \url{ramon.flores@uam.es}
\\[2mm]
%\bf A.B. Khmelnitskaya
%\\ Faculty of Applied Mathematics, Saint-Petersburg State %University, Russia
%\\ e-mail: \url{a.khmelnitskaya@math.utwente.nl}
%\\[2mm]
\bf Elisenda Molina
\\ Instituto de Matem\'atica Interdisciplinar (IMI), Departamento de Estad\'{\i}stica e Investigaci\'on Operativa,
\\ Universidad Complutense de Madrid, Spain
\\ e-mail: \url{elisenda.molina@ucm.es}
\\[2mm]
\bf Juan Tejada\footnote{Proyectos}
\\ Instituto de Matem\'atica Interdisciplinar (IMI), Departamento de Estad\'{\i}stica e Investigaci\'on Operativa,
\\ Universidad Complutense de Madrid, Spain
\\  e-mail: \url{jtejada@mat.ucm.es}
\end{tabular}}
\end{center}
}

\begin{abstract}
\noindent

This paper develops a novel methodological framework for assessing leadership potential and  productivity within organisational structure represented by directed graphs. In this setting, individuals are modeled as nodes and asymmetric supervisory or reporting relationships as directed edges. Leveraging the theory of transferable utility cooperative games, we introduce the Average Forest (AF) measure, a marginalist leadership measure grounded in the enumeration of maximal spanning forests, where teams are hierarchically structured as arborescences. The AF measure captures each agent`s expected contribution across all feasible team configurations under the assumption of superadditivity of the underlying game. We further define a measure of organisational productivity as the expected aggregate value derived from these configurations. The paper investigates key theoretical properties of the AF measure -such as linearity, component feasibility, and monotonicity- and analyzes its sensitivity to structural modifications in the underlying digraph. To address computational challenges in large networks, a Monte Carlo simulation algorithm is proposed for practical estimation. This framework enables the identification of structurally optimal leaders and enhances understanding of how network design impacts collective performance.

\smallskip

\noindent{\bf Keywords:} TU game, directed hierarchical structure, marginal contribution vector, average tree solution

\smallskip

\noindent{\bf JEL Classification Number:} C71

\noindent{\bf Mathematics Subject Classification 2000:} 91A12, 91A43
\end{abstract}

\section{Introduction and Motivation}
\label{intro section}

The measurement of leadership skills in individuals is an important topic in organisational studies, which has generally been conducted through questionnaires and qualitative techniques (Ba\-tis\-ta-Foguet et al., 2021), in an attempt to explore the potential of such individuals. Our approach, however, considers leadership evaluation of an individual by taking into account their current position in the organisational structure, as well as their potential to add value to the formation of working teams that they could lead. To this end, we will consider the organisation to be represented by its organisational chart, i.e. a directed graph, and the possible synergies of forming different working teams as a value function reflecting the potential benefits that the organisation could obtain by assigning a project to that team.  In our approach, it is fundamental to establish which type of working team we should consider. We understand that a working team facing a concrete project must be hierarchically structured.

Such a hierarchical structure has long been associated with increased efficiency in project execution due to its capacity to streamline communication, clarify responsibilities, and optimize decision-making. In hierarchical structures, the flow of information follows a well-defined path, reducing ambiguity and minimising the risk of redundant or conflicting efforts. Such structure ensures that each team member knows their role and to whom they report, which enhances accountability and allows for a more straightforward delegation of tasks (Mintzberg, 1983). Moreover, hierarchical structures are particularly well-suited to projects that require high levels of coordination across different functional areas, as they establish authority gradients that help in managing interdependencies and aligning sub-teams toward common goals (Galbraith, 1973). This clarity becomes even more vital in complex or large-scale projects, where inefficiencies stemming from unclear reporting chains can lead to costly delays or misallocation of resources.

In relation with the problem of team formation in expertise social networks, the advantages of hierarchy directly support the core assumptions of the model presented. The assumption that working teams are organised in hierarchies resonates with findings that structured leadership fosters cohesion and enhances performance, particularly when team tasks are interdependent and time-bound. Furthermore, empirical evidence suggests that hierarchical arrangements promote stability in collaboration patterns, enabling agents to form effective subgroups under recognized leadership (Ahuja, Soda, \& Zaheer, 2012). When leadership is tied both to the structural position within a digraph and the actual productivity of the teams led, as proposed in our model below, the hierarchical organisation not only becomes a governance mechanism but also a driver of measurable performance. By embedding leadership valuation into the network structure, the model aligns with observed organisational dynamics where authority and competence interact to form efficient and adaptive teams.

In this work, we focus on organisational structures in which there is no apparent hierarchical order at first, but which must be hierarchically organised through working teams to develop a project in order to optimize the desired performance.

To be specific, we model an organisation by a digraph, where the nodes represent the members of the organisation, and the arcs reflect the non-symmetric relation among them: $(i,j)$ represents the ability of agents $i$ and $j$ for collaborating effectively under player $i$'s supervision. Observe that any node with several incoming arcs in the digraph may be interpreted as a competition among the superiors/managers for the leadership on the subordinate located at the node. Each player located at a node with several incoming arcs may chose which of his immediate predecessors, who offer the player to join their teams, to follow, but he can follow only one of them. Formally such an assumption is equivalent to the statement that a digraph under scrutiny can be represented by any of its spanning forests, when all the spanning forests are feasible.

Given prior information about the performance of the respective working teams, our aim is to evaluate the efficiency of the structure in terms of its capacity to form working teams. Additionally, we seek to identify the individuals best suited to act as leaders of these working teams. This is achieved by measuring their {\em leadership ability}, which is related to each agent's position in the organisational structure, but also with the performance of each working team that each agent is able to lead. 
%We consider that agents are members of some organisation and the structure of their subordination  is represented by a digraph. The social network reflects the non-symmetric relation among the organisation's members and $(i,j)$ represents the ability of agents $i$ and $j$ for collaborating effectively under player $i$'s supervision. 
 We propose a {\em marginalistic}  leadership valuation which is based on four main assumptions: $(i)$ working teams are organised in hierarchies, $(ii)$ the worth of a working team depends only on its members, $(iii)$ maximal hierarchies are the most productive ones, and $(iv)$ all maximal hierarchies are equally probable to form. Assumptions $(ii)$ and $(iii)$ support to describe each working group's worth by means of an appropriate superadditive TU game, whereas assumption $(iv)$ leads to a {\em non-informative a priori} valuation of each agent's leadership. Theoretically, we define the Average  Forest (AF) measure  and we study its properties. With respect to its properties, special attention is devoted to the right notion of efficiency in this setting, and to the concept of network productivity, as a measure of the ability of the structure to promote the formation of productive teams. We also study the behavior of the Average Forest measure when an existing arc is removed, or a non existing arc is added.

Notwithstanding the fact that the objective of this study is not to define a value for games on digraphs, but rather to measure leadership in the restricted context of organisational situations, the extension of which to more general communication situations may not make sense. The proposed leadership measure does follow a marginalist approach inspired on {\em Average Tree}-like values for graph games, such as the {\em Average tree solution} introduced by Herings {\it et al.} (2008) for cycle-free graph games and generalised in Herings {\it et al.} (2010) to the class of all graph games. Later on, Baron {\it et al.} (2011) elaborates on Average Tree solution for general games and extended  the previous works. This approach had also been adopted in the context of digraph games when the underlying digraph is a tree-digraph in Demange (2004), Khmelnitskaya (2010), and van~den~Brink ,Herings, van~der~Laan and Talman (2015); and for general digraphs in 
 Khmelnitskaya, Sel\c{c}uk and Talman (2020), who introduce the {\em Average Covering Tree value}.  
 
It should be remarked here that if the Average Covering Tree value for general digraph games is used as a measure of leadership, the results obtained are quite different to the ones we get. This is because the model considered in Khmelnitskaya et al. (2020) is very different from ours.  It does not represent appropriately the organisational situation we are interested in. In our model, when we evaluate how productive the organisation is as a whole or how good as leaders are their members, arcs (supervisory relationships) that do not exist in the original organisational structure never appear.   Note that our proposal is not to define a value for digraph-games, that is, for  
  cooperative games in which restrictions in the communications are given by a directed graph.  Instead, we want to measure a specific characteristic of the organisation structure, such as leadership and efficiency.  
  The reader is referred to Gavil\'an, Manuel and van den Brink (2023) for a summary of the different proposals  of directed-graph games and values in the literature: values based on permission structures and hierarchies (Gilles
et al. (1992), Gilles and Owen (1994), van den Brink and Gilles (1996), van den Brink (1997), van den Brink (2017), Algaba and van den Brink (2021), and other values which differ among them on the type of connectivity in the digraph used to define the set of feasible coalitions (Khmelnitskaya, Sel\c{c}uk and Talman (2016), Li and Shan (2020), Gavil\'an et al. (2023).

This article is structured as follows. Section 2 provides all the preliminary concepts from game theory and graph theory necessary to understand the rest of the article. Section 3 begins with the definition of the organisational situation, which constitutes the fundamental context in which our work is developed; it then introduces the Average Forest measure, whose design allows us to evaluate, within an organisational situation, the ability of individuals to lead teams. The main properties of this measure are also described. Section 4 is devoted to the notion of productivity, defined as an average of Average Forest measures, which assesses the ability of the organisational structure to promote the formation of productive working teams. After the definition, several examples and properties of this measure are presented. Section 5 addresses a particularly important phenomenon in this framework: the effect that changes in hierarchical relationships within the organisational situation have on leadership valuations. In Section 6 a Monte Carlo simulation algorithm is proposed to estimate the Average Forest measure in large organisations. The final section concludes the paper.

\section{Preliminaries}
\label{preliminaries section}

In this section we shall introduce some concepts of game theory and graph theory.

\subsection{Transferable utility games}

A \emph{cooperative game with transferable utility} (\emph{TU game}) is a pair $(N,v)$, where $N=\{1,\ldots,n\}$ is a finite set of $n$ players with $n\ge 2$,  and $v\colon 2^N\to\bbbr$ is a \emph{characteristic function} defined on the power set of $N$, satisfying $v(\emptyset)=0$. A subset $S\subseteq N$ is a \emph{coalition} and the associated real number $v(S)$
represents the \emph{worth} of coalition $S$. 

For simplicity of notation and if no ambiguity appears we write $v$ instead of $(N,v)$ when we refer to a TU game. 

The following two properties of TU-games will be crucial throughout the paper:

\begin{itemize}

\item A game
$v\in{\cal G}_N$ is \emph{superadditive} if $v(S\cup T)\ge
v(S)+v(Q)$ for all $S,Q\subseteq N$, such that $S\cap Q=\emptyset$. When the inequality is always an equality the game is called \emph{additive}.

\item  A game $v\in{\cal G}_N$ is \emph{convex} if $v(S\cup Q)+v(S\cap
Q)\ge v(S)+v(Q)$, for all $S,Q\subseteq N$.

\end{itemize}

A \emph{dummy player} is a player $i\in N$ for which $v(S\cup {i})=v(S)+v({i})$ for all $S\subseteq N$.

\subsection{Graphs}

A \emph{graph} on $N$, $\Gamma=(N,E)$, consists of $N$ as a set of nodes and a collection of unordered pairs of nodes $E\subseteq \{\,\{i,j\}\mid i,j\in N,\ i\ne j\}$ as the set of {\em edges}. A \emph{digraph} on $N$, $\Gamma=(N,A)$, consists of $N$ as a set of nodes and a collection of ordered pairs of nodes $A\subseteq\{(i,j)\mid i,j\in N,\ i\ne j\}$ as the set of {\em arcs}.

%Observe that an undirected graph can be considered as a digraph for which $(i,j)\in \Gamma$ iff $(j,i)\in\Gamma$. We say that a digraph $\Gamma$ contains an undirected link $\{i,j\}$ and write $\{i,j\}\in\Gamma$ if $(i,j),(j,i)\in\Gamma$.

%We deal with asymmetric binary relations described by a directed graph. 

For a digraph $\Gamma=(N,A)$ on $N$ and a set $S\subseteq N$, the
\emph{subdigraph} of $\Gamma$ on $S$ is the digraph
$\Gamma_S=(S,A_S)$, with $A_S=\{(i,j)\in\!A\mid i,j\!\in S\}$ on $S$. A digraph $\Gamma'=(N,A')$ where $A'\subset A$ is called  \emph{partial digraph} of the digraph $\Gamma=(N,A)$.
%and the undirected graph $\Gamma|_S=\{\{i,j\}\in\!\Gamma\mid i,j\!\in S\}$ on $S$ when $\Gamma$ is undirected.

 A {\em chain} in a digraph $\Gamma=(N, A)$ is a sequence of nodes $(i_1,i_2,\dots,i_{k-1},i_k)$, $k\geq 2$, and arcs $(a_1,a_2,\dots,a_{k-1})$, such that either $a_{\ell}=(i_{\ell},i_{{\ell}+1})\in A$ or $a_{\ell}=(i_{{\ell}+1},i_{\ell})\in A$, for all $1\leq \ell\leq k-1$. In the sequel, we will refer to a chain as a sequence of nodes without explicit mention of arcs. A {\em path} is an oriented version of a chain in which $(i_{{\ell}},i_{{\ell}+1})\in A$ for any two consecutive nodes $i_{\ell}$ and $i_{{\ell}+1}$ on the path. For $i,j\in N$ if there exists a path from $i$ to $j$ in digraph $\Gamma$ on $N$, then $j$ is a \emph{successor} of $i$ and $i$ is a \emph{predecessor} of $j$ in $\Gamma$. If $(i,j)\in A$, then $j$ is an \emph{immediate successor} of $i$ and $i$ is an \emph{immediate predecessor} of $j$ in $\Gamma$. Let $P^\Gamma(i)$ denote the set of predecessors of $i$ in $\Gamma$, $\widehat P^\Gamma(i)$ the set of immediate predecessors of $i$ in $\Gamma$, $\widehat S^\Gamma(i)$ the set of immediate successors of $i$ in $\Gamma$, $S^\Gamma(i)$ the set of successors of $i$ in $\Gamma$, $\bar P^\Gamma(i)=P^\Gamma(i)\cup\{i\}$, and $\bar S^\Gamma(i)=S^\Gamma(i)\cup\{i\}$. For a node $i\in N$, $\delta^{-}_{\Gamma}(i)=|\widehat P^\Gamma(i)|$ is the \emph{in-degree of $i$} in $\Gamma$. A node $i\in N$ for which $P^\Gamma(i)=\emptyset$ is a
\emph{source} in $\Gamma$.  A {\em cycle} is a chain $(i_1,i_2,\dots,i_{k-1},i_k)$, $k\ge 2$, together with the arc $(i_k,i_1)$ or $(i_1,i_k)$\footnote{Notice that in case $k=2$, if the chain is $(i_1,i_2)$, then the added arc must be the reverse one $(i_2,i_1)$.}. We shall denote a cycle using the notation $(i_1,i_2,\dots,i_{k-1},i_k,i_1)$. Analogously, a {\em circuit} $(i_1,i_2,\dots,i_{k-1},i_k,i_1)$ is a path $(i_1,i_2,\dots,i_{k-1},i_k)$ together with the arc $(i_k,i_1)$.
%\footnote{Notice that in a digraph a directed cycle of length 2 is not well defined.}
A digraph is \emph{circuit-free} if it contains no circuits, and it is \emph{cycle-free} if it contains no cycles.

%Given an undirected graph $\Gamma$ and an equivalence relation over the set of nodes, the quotient graph $\bar{\Gamma}$ by the relation has the equivalence classes as the set of nodes, and there is an edge between two different nodes $\bar{v}$ and $\bar{w}$ in $\bar{\Gamma}$ if there exist two preimages $v$ and $w$ of $\bar{v}$ and $\bar{w}$ under the quotient projection $V(\Gamma)\rightarrow V(\bar{\Gamma})$ that are linked in $\Gamma$. If $\Gamma$ is directed, then $\bar{\Gamma}$ is also directed in a natural way, as the edges in the quotient inherit the  directions of their preimages.

%Given a digraph $\Gamma=(N,A)$ and an equivalence relation $\sim$ over the set of nodes $N$, the quotient graph $\bar{\Gamma}=\Gamma/\sim$ is a new graph whose set of nodes is the quotient set $N/\sim$ and whose edges are given by pairs $(\bar{u},\bar{v})$ of nodes in $\bar{\Gamma}$ such that $(u,v)$ is an edge in $\Gamma$. If $\Gamma$ is directed, then $\bar{\Gamma}$ is also directed in a natural way, as the edges in the quotient inherit the  directions of their preimages.

Given a digraph $\Gamma$ on $N$, two nodes $i,j\in N$ are \emph{connected} in  $\Gamma$ if there exists a chain in $\Gamma$ between $i$ and $j$. A digraph $\Gamma$ is \emph{connected}\, if any $i,j\in N$, $i\ne j$, are connected in $\Gamma$. A subset $S\subseteq N$ is \emph{connected}\, in $\Gamma$ if $\Gamma_S$ is connected. For $S\subseteq N$, $C^\Gamma(S)$ denotes the collection of subsets of $S$ connected in $\Gamma$, $S/\Gamma$ is the collection of maximal connected subsets, called \emph{components}, of $S$ in $\Gamma$, and $(S/\Gamma)_i$ is the (unique) component of $S$ in $\Gamma$ containing $i\in S$.

Assume that $\Gamma$ is a weakly connected digraph with no circuits. A \emph{root} $r(\Gamma)$ in $\Gamma$ is a node such that for every other node $i\in N\setminus r(\Gamma)$ there is at least a path in $\Gamma$ from $r(\Gamma)$ to $i$. Moreover, a digraph $\Gamma$ on $N$ is a \emph{arborescence} if it has a unique root such that the path from the root to any other node is unique. A node in an arborescence having no successors is a \emph{leaf}, while an arborescence in which every node has at most one immediate successor is a \emph{path graph}. An arborescence $T$ is a \emph{spanning tree} of a digraph $\Gamma=(N,A)$ if $T$ is a {\em partial digraph of $\Gamma$}. A digraph whose connected components are arborescences is called a \emph{forest}, and is a \emph{spanning forest} if it is a partial digraph.  Notice that a spanning tree is a particular case of a spanning forest with only one component. Moreover, a spanning forest is called \emph{maximal} if adding any other arc of the digraph to it does not give rise to a forest. Let us denote by ${\cal F}^\Gamma$ be the collection of maximal spanning forests of $\Gamma=(N,A)$.

\begin{example}
Figure~\ref{fig-example1} gives an example of a digraph $\Gamma$ on $\{1,2,3,4,5\}$ and its two maximal spanning forests:

\begin{figure}[h]
\begin{center}

\begin{tikzpicture}[scale=1, every node/.style={circle, draw, inner sep=2pt,font=\bf}]

\node [line width=1.2pt] (1) at (10.05,1) {1};
\node [line width=1.2pt] (4) at (9.1,-1) {4};
\node [line width=1.2pt] (3) at (10.6,0) {3};
\node [line width=1.2pt] (2) at (9.6,0)  {2};
\node [line width=1.2pt](5) at (10.1,-1) {5};

\draw[->] [line width=1.2pt] (1) to (2);
\draw[->] [line width=1.2pt] (1) to (3);
\draw[->] [line width=1.2pt] (2) to (4);
\draw[->] [line width=1.2pt] (5) to (2);

\node [line width=1.2pt] (12) at (14.25,1) {1};
\node [line width=1.2pt] (22) at  (13.8,0) {2};
\node [line width=1.2pt] (32) at (14.8,0)  {3};
\node [line width=1.2pt] (42) at (13.3,-1) {4};
\node [line width=1.2pt](52) at (14.8,-1) {5};

\draw[->] [line width=1.2pt] (12) to (32);
\draw[->] [line width=1.2pt] (12) to (22);
\draw[->] [line width=1.2pt] (22) to (42);

\node [line width=1.2pt] (13) at (17.55,1) {1};
\node [line width=1.2pt] (33) at  (17.55,0) {3};
\node [line width=1.2pt] (23) at (18.95,0)  {2};
\node [line width=1.2pt] (43) at (18.95,-1) {4};
\node [line width=1.2pt](53) at (18.95,1) {5};

\draw[->] [line width=1.2pt] (13) to (33);
\draw[->] [line width=1.2pt] (53) to (23);
\draw[->] [line width=1.2pt] (23) to (43);

\end{tikzpicture}
\end{center}

\caption{On the left, the digraph $\Gamma$. On the right, the collection ${\cal F}^\Gamma=\{ F_1,F_2\}$ of its maximal spanning forests.}
\label{fig-example1}
\end{figure}
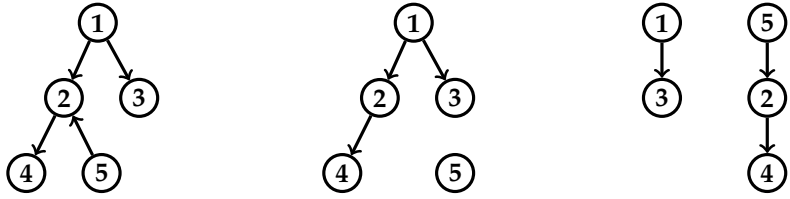

\label{example1}
\end{example}

%\subsection{Values for games for a directed communication situation}

%A triple $(N, v,\Gamma)$ given by a TU game $v\in{\cal G}_N$ and a communication digraph $\Gamma=(N,A)$ is called a \emph{directed communication situation} or \emph{digraph game} on $N$. The set of all digraph games on a fixed player set $N$ is denoted by ${\cal G}_N^\Gamma$. A \emph{value} on a subset $\cal G$ of ${\cal G}_N^\Gamma$ is a function $\xi\colon{\cal G}\to\bbbr^N$ that assigns to every digraph game $(N,v,\Gamma)\in{\cal G}$ a vector of payoffs $\xi(N, v,\Gamma)\in{\bbbr}^N$. Our goal is to define an appropriate value for digraph games for measuring the ability of each agent to lead working teams.

%{\color{red} revisar y actualizar referencias y el esquema del paper}

\section{Measuring leadership in an organisational situation}
\label{AF section}

In this work, we focus on an organisational structure in which there is no apparent hierarchical order at first, but which must be hierarchically organised through working teams in order to optimize the desired performance.
Given prior information about the performance of the respective working teams, our aim is to evaluate the efficiency of the structure in terms of its capacity to form working teams. Additionally, we seek to identify the individuals best suited to act as leaders of these working teams.

In this section, we will introduce a leadership measure for each individual in an organisational situation. This measure will take into account their position in the organisational structure, as well as the value they add when leading a working team.

%The introduced measure can be considered also as a value for a cooperative game restricted by a directed graph that we shall call \emph{average forest value}.

First of all, given a digraph $\Gamma=(N, A)$, we interpret the arc $(i,j)\in A$ as member $i$ has the potential power of organising the tasks of member $j$, or alternatively, the potential obligation of $j$ to report to $i$. A \emph{organisational structure} is a digraph without circuits.

\begin{comment}

Consequently, we will consider that a circuit in the network is like a collegiate body, which in turn can be considered as a single member. More formally, considering the equivalence relation, member $i$ is equivalent to member $j$ if and only if $i$ is a successor of $j$ and $j$ is a successor of $i$, we will collapse the graph $\Gamma$ to its quotient graph with respect to this equivalence relation.

Then, in the following, we will consider that the organisation network is given by a graph without circuits.
    
\end{comment}

%{\color{red} merece la pena relacionar la idea anterior con las estructuras de permiso de Rene??? survey de TOP \cite{Rene17}, peer group games. SI, AUNQUE SEA UNA FRASE QUE DIGA QUE ESA ESTRUCTURA DIFIERE DE LA DE RENE...SIN MAS} Mirar tambi\'e las colecciones factibles de Conrado y Elena. \cite{gmv}

% {\color{blue} No sabes que proyectos te van a venir, tienes que evaluar a priori la capacidad de formar diferentes equipos, no sabes cuales seran por eso lo que considerar una a priori no informativa (equiprobable)}

Given the above interpretation what we shall measure is the ability of each agent to lead working teams. Formally, given a organisational structure $\Gamma=(N,A)$, a \emph{working team} is any partial subdigraph of $\Gamma$ which is an arborescence. A working team is lead by its root.

% {\color{blue} comentar que no solo indica quienes estan en el equipo sino tambien como se organisan el trabajo. quien reporta a quien.}

The value or obtained benefit of a working team is given by the game $(N, v)$, i.e. if $S\subset N$ is the set of members of a working team, its value is given by $v(S)$. That value could be calculated by evaluating the benefits that the group forming the working team can obtain  developing a project. Note that the value only depends on the members of the working team and not of the way they are organised. Moreover, we shall consider superadditive games, assuming that toxic leadership has been previously avoided.

Finally, $(N, v, \Gamma)$, with $\Gamma$ an organisational structure and $v$ superadditive,  will be called an \emph{organisational situation}. The class of organisational situations with set of agents $N$ is denoted by ${\cal G}^N$. In this setting, we define a {\em leadership measure} as a real non-negative function $\xi: {\cal G}^N \rightarrow \mathbb{R}^n$.

\subsection{Average forest leadership measure: definition and properties}

As we said, in this section we introduce a leadership measure on the class of organisational situations for evaluating the ability of each agent to lead working teams. We propose a  {\em marginalistic} measure based on four main assumptions: $(i)$ working teams are organised hierarchically (arborescences), $(ii)$ the worth of a working team depends only on its members, $(iii)$ the set of members $N$ are organised in working teams that form a maximal spanning forest of $\Gamma$ that are the most productive ones, and $(iv)$ all maximal spanning forests are equally probable to form. Assumptions $(ii)$ and $(iii)$ support to describe each working group's worth by means of an appropriate superadditive TU game. Assumptions $(i)$ and $(iii)$ lead the maximal spanning forest to be the optimal configurations of the social network in working teams, whereas
 assumption $(iv)$ leads to a {\em non-informative a priori} valuation of each agent's leadership.

%Based on forests and working {\em teams}, which are given by subgraphs of the arborescences (i.e., {\em hierarchies}\footnote{In this setting, the terminology of ``hierarchies'' and ``teams'' coincide with those of Currarini \cite{Cu07}.}) that compose the forests, 

Based on these ideas, we define the \emph{marginal contribution vector} $m^{(v,\Gamma)} (F)\in\bbbr^N$ with respect to a forest $F\in {\cal F}^{\Gamma}$  as the vector:
\begin{equation*}\label{mc-tv}
m_i^{(v,\Gamma)}(F)=v(\bar S^F(i))-
       \sum\limits_{j\in\widehat S^F(i)}v(\bar S^F(j)),
               \qquad\mbox{for all}\enspace i\in N,
\end{equation*}
or equivalently,
\begin{equation*}\label{mcv}
m_i^{(v,\Gamma)}(F)=v(\bar S^F(i))-
       \sum\limits_{C\in S^F(i)/F}v(C),
               \qquad\mbox{for all}\enspace i\in N.
\end{equation*}

In the marginal contributions' vector that corresponds to a forest $F$, a player receives as payoff the difference between the worth of the maximal working team that he can lead and the total worths of the subteams in which this maximal team  splits when he quits. This difference is the contribution of the player when he joins his successors (in the arborescence) to lead the created working team. Now, if we do not have any prior information about the probability of formation of each of the maximal spanning forests $F\in {\cal F}^{\Gamma}$, we could assume that all of them are equally probable, and propose as a priori evaluation of each player leadership the following measure.

\begin{definition}
For an organisational situation $(N, v, \Gamma)\in {\cal G}^N$, the
\emph{average forest leadership measure} (\emph{AF measure}) is the average of the marginal contribution vectors corresponding to all maximal spanning forests of the digraph $\Gamma$, i.e.,
$$
AF(N, v,\Gamma)=\frac{1}{|{\cal F}^\Gamma|}
     \sum\limits_{F\in{\cal F}^{\Gamma}}m^{(v,\Gamma)}(F).
$$
\label{AFV-definition}
\end{definition}

Due to the superadditivity of $v$, the $AF$ measure is non-negative and is therefore a leadership measure.

\begin{example}
\label{example2}
In Example \ref{example1}, the AF measure is 
$AF_1(N,v,\Gamma)=\frac{1}{2}(v(\{ 1,2,3,4\})-v(\{2,4\})-v(3)) + \frac{1}{2}(v(\{ 1,3\})-v(3))$,
$AF_2(N,v,\Gamma)=v(\{ 2,4\})-v(4)$, $AF_3(N,v,\Gamma)=v(3)$, $AF_4(N,v,\Gamma)=v(4)$, and
$AF_5(N,v,\Gamma)=\frac{1}{2}v(5) + \frac{1}{2}(v(\{ 5,2,4\})-v(\{2,4\}))$. For instance, for the attachment game $v(S)=\abs{S}-1$, the $AF$ vector is $(3/2, 1, 0, 0, 1/2)$.
%\label{example2}
\end{example}

Note that if the organisational structure $\Gamma$ is a forest, then the Average Forest measure coincides with the {\em Tree value for forest digraph games}, introduced in Demange (2004) and axiomatised in Khmelnitskaya (2010).

In the sequel we will analyse several properties of the $AF$ measure.

 A leadership measure $\xi$ on ${\cal G}^N$ is \emph{linear} if for any two organisational situations $(N, v,\Gamma)$, $(N, w,\Gamma)\in{\cal G}^N$ and all $a,b\in\bbbr$, it holds that
\begin{equation*}
\xi(N,av+bw,\Gamma)=a\xi(N,v,\Gamma)+b\xi(N, w,\Gamma),
\end{equation*}
where $av+bw$ is defined as $(av+bw)(S)=av(S)+bw(S)$ for all $S\subseteq N$.

%{\color{blue} Meter definicion de Locality, o mirar succesor equivalence definicion formal}

In order to define a \emph{dummy-player property}, we must first define properly what a {\em dummy player} is in an organisational situation. Obviously, the information given by the game is not enough to get a good definition, as a player can be dummy in the game, but not in the organisational situation by leading value added working teams.
Conversely, a player can be a non-dummy player in $(N, v)$, but in the organisational situation to have no opportunity of merging working teams to which he could add value. 

%{\color{blue} The same reasoning applies to the notion of {\em null player}.

%Therefore, following van den Brink et al. \cite{vBH15}, $i\in N$ is an \emph{inessential player in the command situation} $(N,v,\Gamma)\in {\cal G}^N$, if both $i$ and all its subordinates in $\Gamma$ are null players in $(N,v)$, i.e., $v(T\cup j)-v(T)=0$, for all $T\subseteq N\setminus j$, for all $j\in \overline{S}^\Gamma (i)$.

Therefore, an \emph{organisational situation dummy player} is a player $i$ for which $m_i^{(v,\Gamma)}(F)=v({i})$, for all $F\in {\cal F}^{\Gamma}$. 

The above definition means that a dummy player only can merge non synergistic working teams in any forest to which he/she does not add any value. Obviously the leafs are dummies.

%In fact, we weaken the above definition and consider $i\in N$ an inessential player whenever $i$ and all its subordinates in $\Gamma$ are null players in the restricted game $(\overline{S}^\Gamma (i),v\vert_{\overline{S}^\Gamma (i)})$. 

%Note that a command situation dummy player is a player $i$ for which $m_i^{(v,\Gamma)}(F)=v({i})$, for all $F\in {\cal F}^{\Gamma}$. 

In these conditions, we say that a leadership measure $\xi$ on ${\cal G}^N$ satisfies the \emph{organisational situation dummy-player property} if for any organisational situation $(N, v,\Gamma)\in{\cal G}^N$ it holds that $\xi_i(N, v,\Gamma)=v({i})$ for any organisational situation dummy player $i$. 

Note that a player $i\in N$ such that he/she  and all its subordinates in $\Gamma$ are dummy players in $(N,v)$, is also an organisational situation dummy player. Thus, dummy-player property implies inessential player property of van den Brink et al. (2015). In fact, it is a stronger property, since not all inessential player in the sense of van de Brink et al. (2015) is an organisational situation inessential player.

%whenever $m_i^{(v, \Gamma)}(T)=0$ for all arborescence % subtree\footnote{Here, a subtree  $T$ of $\Gamma$ is a partial subgraph of $\Gamma$ that is an arborescence on $N_T=\{ i\in N\,\vert\, (i,j)\in T \text{ or } (j,i)\in T\}\subseteq N$.} 
%$T$ in $\Gamma$ containing $i$.

It is easy to see that the linearity and the organisational situation dummy-player property of the average forest measure follow straightforwardly from its definition.

Now, a leadership measure $\xi$ is \emph{component efficient} (CE) if for any organisational situation $(N,v,\Gamma)\in {{\cal G}^N}$, and for all $C\in N/\Gamma$ it holds that $$x(C):=\sum_{i\in C}\xi_i(v,\Gamma)=v(C),$$ and \emph{component feasible} if $x(C)\le v(C)$ for all $C\in N/\Gamma$.
  It is clear that the AF measure fails to verify component efficiency. However, as the underlying game is superadditive, the worth $v(C)$ of a connected component $C$ is an upper bound for its total value, i.e., it is component feasible. In this setting, when coalition $S$ forms, their agents look for the best possible arrangement of themselves on working teams, which must be arborescences;  therefore, those working teams constitute a partition which is a refinement of the partition determined by its connected components. Since we are interested on the case in which the grand coalition forms, we obtain the following result.

\begin{proposition}\label{efficiency-rationality}
The average forest measure is component feasible, i.e.,
$$
\sum_{i\in C}AF_i(N,v,\Gamma)\leq v(C),
$$
for every connected component $C$ of $\Gamma$.
\end{proposition}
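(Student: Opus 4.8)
The plan is to use the fact that the $AF$ measure is a plain average of the marginal contribution vectors $m^{(v,\Gamma)}(F)$ over $F\in{\cal F}^\Gamma$, so the bound can be established forest by forest. Concretely, I would first reduce the claim to showing that for every maximal spanning forest $F\in{\cal F}^\Gamma$ and every component $C\in N/\Gamma$,
$$\sum_{i\in C}m_i^{(v,\Gamma)}(F)\le v(C);$$
summing over all $F$ and dividing by $\abs{{\cal F}^\Gamma}$ then yields the proposition, since averaging preserves the inequality.

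The core is a telescoping computation for a fixed $F$. First I would observe that, since $F$ is a partial digraph of $\Gamma$, any successor in $F$ of a node of $C$ again lies in $C$; hence each arborescence of $F$ containing a node of $C$ is entirely contained in $C$, and these arborescences partition $C$ into blocks $C_1,\dots,C_k$ with roots $r_1,\dots,r_k$. Writing
$$\sum_{i\in C}m_i^{(v,\Gamma)}(F)=\sum_{i\in C}v(\bar S^F(i))-\sum_{i\in C}\sum_{j\in\widehat S^F(i)}v(\bar S^F(j)),$$
I would note that in a forest every non-root node has exactly one immediate predecessor, which lies in the same block; hence the subtracted terms are precisely $\sum_{j\in C:\,\widehat P^F(j)\ne\emptyset}v(\bar S^F(j))$. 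Every $v(\bar S^F(i))$ indexed by a non-root $i$ therefore cancels, and the sum telescopes to $\sum_{\ell=1}^{k}v(\bar S^F(r_\ell))=\sum_{\ell=1}^{k}v(C_\ell)$, using $\bar S^F(r_\ell)=C_\ell$.

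Finally I would invoke superadditivity of $v$: applying $v(S\cup Q)\ge v(S)+v(Q)$ for disjoint coalitions repeatedly to the partition $C=C_1\cup\cdots\cup C_k$ gives $\sum_{\ell=1}^{k}v(C_\ell)\le v(C)$, which closes the per-forest bound and hence the proposition.

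The main obstacle is not a hard estimate but careful bookkeeping in the telescoping step: one must verify that the only surviving positive terms after cancellation are exactly those indexed by the roots of the arborescences of $F$ restricted to $C$. This rests on two structural facts that I would state explicitly—each node has at most one immediate predecessor in a forest, and successors in $F$ never leave the ambient $\Gamma$-component. It is also worth remarking that the bound is generally strict precisely when $F$ splits $C$ into more than one block (which happens whenever in-degree constraints prevent a single spanning arborescence of $C$, as with the isolated node $5$ in $F_1$ of Example~\ref{example1}); this is exactly the mechanism behind the failure of component efficiency noted above.
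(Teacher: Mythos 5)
Your proof is correct and takes essentially the same route as the paper's: reduce to a per-forest bound, note that each maximal spanning forest partitions every component $C$ into arborescence blocks, show the marginal contributions over each block telescope to that block's worth, and apply superadditivity to bound the sum of block worths by $v(C)$ before averaging. The only difference is presentational: you spell out the telescoping cancellation (roots survive, non-root terms cancel against their unique immediate predecessor's subtraction) and the fact that blocks cannot cross $\Gamma$-components, both of which the paper asserts without detail.
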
 

%{\red cuando se da la igualdad)}

\begin{proof}
For any $(N, v,\Gamma)\in{\cal G}^N$ the average forest measure $AF(N,v,\Gamma)$ is the average of the marginal contribution vectors that correspond to all spanning forests of the digraph $\Gamma$. Let $F\in{\cal F}^\Gamma$. By definition of forest, $F$ provides a partition of $N$ given by a set of coalitions ${\cal P} (F)=\{N_{k}\}_{k=1}^m$, such that $F|_{N_{k}}$ is an arborescence on $N_{k}$, $k=1,...,m$. From the definition of the marginal contribution vector corresponding to a forest we easily obtain that
 $$
 \sum_{i\in N_{k}} m_i^{(N,v,\Gamma)}(F)=v(N_{\ell}),
               \quad\mbox{for all}\enspace k=1,\dots,m.
 $$
Whence, taking into account that partition ${\cal P} (F)$ must be a refinement of $N/\Gamma$, and together with superadditivity of $v$, we get that for every component $C\in N/\Gamma$ of $N$ in $\Gamma$, $C=\cup_{k\in {\cal K}} N_k$, where ${\cal K}\subseteq \{1,\dots,m\}$, and therefore
 $$
 \sum_{i\in C}m_i^{(v,\Gamma)}(F)= \sum_{k\in {\cal K}} \sum_{i\in N_k} m_i^{(N,v,\Gamma)}(F)=
        \sum_{k\in {\cal K}}v(N_{\ell})\le v(C),
 $$
i.e., every marginal contribution vector corresponding to a forest provides a component feasible payoff.
\end{proof}

From this result, it follows that the measure does not have to be efficient when the structure is not efficient. See, for instance, Example \ref{example2}. We shall consider this issue in section \ref{secprod}.  

Finally, we consider monotonicity properties of the $AF$ measure. Monotonicity is a key property in measurement, as it reflects the expectation that the value of a measure should increase when the relevant conditions or inputs improve. Here we shall consider some monotonicity conditions that appear in the game theory literature (see, for instance (Young, 1985).

\emph{Coalitional Monotonicity (CM)}: given an organisational situation $(N, v, \Gamma)$, a leadership measure $\xi$ is coalitionally monotonic if for a fixed $\Gamma$, $\xi_i(N,v,\Gamma)\leq \xi_i(N,w,\Gamma)$, for $i\in T \subseteq N$, when (i) $v(S)=w(S)$ for all $S\neq T$ and (ii) $v(T) < w(T)$.

\emph{Strong Monotonicity (SM)}: given an organisational situation $(N, v, \Gamma)$, a leadership measure $\xi$ is strongly monotonic if for $v(S\cup \{i\})-v(S) \leq w(S\cup \{i\})-w(S)$, for all $S\subseteq N\setminus \{i\}$, then $\xi_i(N,v,\Gamma)\leq \xi_i(N,w,\Gamma)$.

\emph{Individual Monotonicity (IM)} Given an organisational situation $(N, v, \Gamma)$, a leadership measure $\xi$ is individually monotonic if for $v(S)=w(S)$ for all $S\subseteq N\setminus \{i\}$ and $v(S\cup \{i\})\leq w(S\cup \{i\})$, for all $S\subseteq N\setminus \{i\})$, then $\xi_i(v)\leq \xi_i(w)$.

The $AF$ measure trivially satisfies CM, but not SM: assume the condition of SM is fulfilled and assume also that there exist two working teams $T_1$ and $T_2$ in $\Gamma$ that can be coordinated by $i$. Then the inequality $v(T_1\cup T_2\cup \{i\})-v(T_1)-v(T_2)\leq w(T_1\cup T_2\cup \{i\})-w(T_1)-w(T_2)$ cannot be guaranteed as it also depends on the values of the measures of the working teams in the games $v$ and $w$. Conversely, that inequality can be guaranteed by IM.

\section{Measuring productivity of an organisational situation}
\label{secprod}

Taking into account the possible inefficiency of the average forest measure, we can consider that can be organisational structures that are more or less productive. From this point of view, Proposition \ref{efficiency-rationality} gives an upper bound for the total benefit that the entire group $N$ can achieve. Now, if we want to give {\em a priori evaluation} of the ability of the organisational structure to promote the formation of productive working teams, we can assume that every possible partition of the entire group $N$ in maximal hierarchies is equally probable and define the {\em productivity of the digraph} $\Gamma$, given the TU game $(N,v)$ as follows, where the benefit that the entire group $N$ can achieve when they are organised as the spanning forest $F\in {\cal F}^{\Gamma}$ is given by the sum of the value that each of its hierarchies can get.

\begin{definition}
 Let $(N,v,\Gamma)\in{\cal G}^N$; then, the  {\em productivity of the organisational situation} $(N,v,\Gamma)$ is defined to be
$$
Prod(N,v,\Gamma) = \frac{1}{|{\cal F}^\Gamma|}\sum_{F\in {\cal F}^{\Gamma}} \sum_{K\in N/F} v(K) .
$$
\end{definition}

Note that the value that the Average Forest measure assigns to the entire group $N$,
\begin{equation}
\displaystyle
\sum_{i\in N} AF_i(N,v,\Gamma)=Prod(N,v,\Gamma)
 \label{sumAVF-productivity}
 \end{equation}
 In fact, this is precisely the reason for failing component efficiency. In this framework, component efficiency is in general a non affordable goal, that depends on the productivity of the relational structure. Let us analyse some examples.

\begin{example}
Let us consider the two following structures, $\Gamma^1, \Gamma^2$, depicted in Figure \ref{productivity-example1}:
\begin{figure}[h]
\begin{center}
\begin{tikzpicture}[scale=1, every node/.style={circle, draw, inner sep=2pt,font=\bf}]

\node[line width=1.2pt] (1) at (0.75,2.4) {1} ;
\node[line width=1.2pt] (4) at (1.5,0) {4} ;
\node[line width=1.2pt] (2) at (1.5,1.2) {2} ;
\node[line width=1.2pt] (3) at (2.25,2.4) {3} ;

\draw[->] [line width=1.2pt] (1) to (2);
\draw[->] [line width=1.2pt,color=red] (3) to (2);
\draw[->] [line width=1.2pt] (2) to (4);

\node[line width=1.2pt] (1) at (4.75,2.4) {1} ;
\node[line width=1.2pt] (4) at (5.5,0) {4} ;
\node[line width=1.2pt] (2) at (5.5,1.2) {2} ;
\node[line width=1.2pt] (3) at (6.25,2.4) {3} ;

\draw[->] [line width=1.2pt] (1) to (2);
\draw[->] [line width=1.2pt,color=red] (2) to (3);
\draw[->] [line width=1.2pt] (2) to (4);

\end{tikzpicture}
\end{center}
\caption{On the left, organisational structure $\Gamma^1$. On the right, organisational structure $\Gamma^2$}
\label{productivity-example1}
\end{figure}
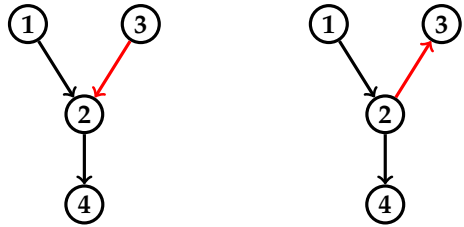

In this example the two organisational structures are very close. However,  $\Gamma^1$ is less productive that it is $\Gamma^2$, since competition among agents $1$ and $3$ over the leadership of $2$ lead one of them to remain out of the working group. The organisational structure $\Gamma^2$ achieves the maximum productivity of $v(N)$, whereas both maximal forest of  $\Gamma^1$ are less productive:
$$
Prod(N,v,\Gamma^1) = \frac{1}{2} \bigl ( v(1)+v(\{2,3,4\}\bigr ) + \frac{1}{2} \bigl ( v(3)+v(\{1,2,4\}\bigr ) \leq v(N).
$$
In this example is not possible to organise the members of the unique connected component of $\Gamma^1$ in an effective working hierarchy without splitting them in two independent working groups.  
\end{example}

The following theorem provides a necessary and sufficient condition for the efficiency of the Average Forest measure, i.e. for the organisation to reach its maximum productivity. First, we recall the definition of quasi-strongly connected digraph.

\begin{definition}[Zeng et al. (2015)] Let $\Gamma$ be a weakly connected digraph with no circuits. Then $\Gamma$ is said to be {\em quasi-strongly connected} if it has a root, or equivalently, if it has a maximal spanning tree.   
\end{definition}

Observe that the root must be unique. In turn, the following equivalence is very easy to check:

\begin{proposition}
\label{rootarbo}
In the previous conditions, $\Gamma$ is quasi-strongly connected if every maximal forest of $\Gamma$ is an arborescence.   
\end{proposition}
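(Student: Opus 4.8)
The plan is to read the statement as a genuine equivalence and prove both implications, since it is announced as such. The decisive tool will be a characterization of maximality, which I would establish first: for a spanning forest $F$ of $\Gamma$ and an arc $(a,b)\in A\setminus F$, the digraph $F\cup\{(a,b)\}$ is again a forest precisely when $a$ and $b$ lie in different components of $F$ and $b$ is the root of its component. A short two-case check (same component, where any added arc closes an undirected cycle; versus different components, where the enlargement is a forest exactly if $b$ had no incoming arc, i.e. was a root) settles this. Consequently $F$ is a maximal spanning forest if and only if no component-root $\rho$ of $F$ receives a $\Gamma$-arc from outside its own component; equivalently, every arc $(a,\rho)\in A$ into a root $\rho$ of $F$ has $a$ in the same component as $\rho$.

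For the easy implication, suppose every maximal spanning forest of $\Gamma$ is an arborescence. The empty partial digraph is a spanning forest, so $\Gamma$ admits at least one maximal spanning forest; by hypothesis it is an arborescence, hence, being a partial digraph on all of $N$ with a single component, a spanning tree of $\Gamma$. Thus $\Gamma$ has a root and is quasi-strongly connected, and this direction is immediate.

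The substance lies in the converse. I would assume $\Gamma$ is quasi-strongly connected with root $r$ and suppose, for contradiction, that some maximal spanning forest $F$ has at least two components. Then there is a component $C$ of $F$ whose root $\rho$ differs from $r$ (distinct components have distinct roots, and $r$ can be the root of at most one of them). Since $r$ is a root of $\Gamma$ and $\rho\ne r$, there is a directed path from $r$ to $\rho$ in $\Gamma$; let $(u,\rho)$ be its last arc, so $u\ne\rho$ and $(u,\rho)\in A$. As $\rho$ is a root of its component it has no incoming arc in $F$, so $(u,\rho)\in A\setminus F$, and the maximality characterization forces $u$ to lie in the same component $C$ as $\rho$.

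It then remains to convert this into a contradiction: since $u\in C$ and $\rho$ is the root of the arborescence $F|_C$, there is a unique directed $F$-path from $\rho$ to $u$, built from arcs of $\Gamma$; appending $(u,\rho)\in A$ yields a circuit $\rho\to\cdots\to u\to\rho$ of length at least two, contradicting the circuit-freeness of $\Gamma$. Hence $F$ has a single component and is an arborescence. I expect this final move to be the main obstacle: one must argue that a component root distinct from $r$ is necessarily reached from outside, and then recognise that the arc forced to stay inside the component closes a directed circuit -- this is exactly the point where circuit-freeness, as opposed to mere cycle-freeness, is indispensable.
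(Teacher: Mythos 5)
Your proof is correct and complete. Note that the paper itself offers no proof of this proposition at all --- it is introduced with ``the following equivalence is very easy to check'' and left as an assertion --- so there is no argument to compare against; your write-up simply supplies the omitted verification. You correctly read the bare ``if'' in the statement as the intended ``if and only if,'' and both directions are sound: the exchange-type characterization of maximality (an arc $(a,b)\notin F$ can be added exactly when $a$ and $b$ lie in different components and $b$ is a component root), the existence of a maximal spanning forest for the easy direction, and the closing of a directed circuit for the converse all hold under the standing hypotheses (weakly connected, circuit-free). One trivial slip in your final remark: in this paper's terminology cycle-freeness (no undirected cycles) is the \emph{stronger} property and circuit-freeness the weaker one, so ``circuit-freeness, as opposed to mere cycle-freeness'' has the comparison backwards; what matters, and what your argument correctly uses, is that the assumed absence of directed circuits is exactly what the constructed closed directed walk violates.
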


Now we can state the following result:

\begin{theorem}\label{th:2}
Let $(N,v,\Gamma)\in{\cal G}^N$. If $\Gamma$ is a quasi-strongly connected digraph, then $Prod (N,v,{\Gamma})=v(N)$. Conversely, it is also a necessary condition if $v$ is non-additive, up to component additivity .
\end{theorem}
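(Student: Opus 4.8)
The plan is to handle the two implications separately, using Proposition~\ref{rootarbo} to convert the structural hypothesis on $\Gamma$ into information about the shape of the maximal spanning forests.

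For the sufficiency, suppose $\Gamma$ is quasi-strongly connected. By Proposition~\ref{rootarbo} every maximal forest $F\in{\cal F}^\Gamma$ is then a single arborescence spanning all of $N$, so that $N/F=\{N\}$ and $\sum_{K\in N/F}v(K)=v(N)$. As this identity holds term by term, averaging over ${\cal F}^\Gamma$ immediately yields $Prod(N,v,\Gamma)=v(N)$; no superadditivity beyond the bare definition is needed here.

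For the necessity I would first record the upper bound already underlying Proposition~\ref{efficiency-rationality}: for any $F\in{\cal F}^\Gamma$ the family $N/F$ partitions $N$, so iterating superadditivity gives $\sum_{K\in N/F}v(K)\le v(N)$, and hence $Prod(N,v,\Gamma)\le v(N)$ with equality \emph{if and only if} every individual forest attains $\sum_{K\in N/F}v(K)=v(N)$. I would then argue by contraposition: if $\Gamma$ is not quasi-strongly connected, Proposition~\ref{rootarbo} produces a maximal forest $F_0$ that is \emph{not} an arborescence, i.e.\ $N/F_0=\{K_1,\dots,K_m\}$ with $m\ge 2$ (and in fact the roots of every maximal forest coincide with the fixed set of sources of $\Gamma$, so only the groupings vary). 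It then suffices to show that this partition witnesses \emph{strict} superadditivity, $\sum_{\ell=1}^{m}v(K_\ell)<v(N)$, which forces $Prod(N,v,\Gamma)<v(N)$.

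The delicate point — and the reason for the clause ``up to component additivity'' — is precisely this last strict inequality, which I expect to be the main obstacle. Global non-additivity of $v$ does not by itself guarantee strictness on the particular partition $N/F_0$: one can exhibit superadditive, non-additive games for which $\sum_\ell v(K_\ell)=v(N)$ on every partition realizable as a component partition of a maximal forest (the additive games being the extreme case, for which $Prod(N,v,\Gamma)=v(N)$ holds for \emph{every} $\Gamma$). I would therefore make the hypothesis precise as requiring that $v$ not be additive on the relevant components, so that at least one forest-induced partition detects strict superadditivity; under that assumption the forest $F_0$ above gives $\sum_\ell v(K_\ell)<v(N)$ and the contrapositive is complete. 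In short, the structural half is a direct corollary of Proposition~\ref{rootarbo}, while the real work is isolating the exact class of games for which quasi-strong connectivity becomes necessary and checking that the single ``bad'' forest $F_0$ actually registers the strict superadditivity that class provides.
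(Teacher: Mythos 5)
Your proof follows essentially the same route as the paper's: sufficiency is the same term-by-term application of Proposition~\ref{rootarbo} (every maximal forest is an arborescence, so every term in the average equals $v(N)$), and necessity is the same contrapositive argument via a maximal forest with at least two components whose induced partition must register strict superadditivity. If anything, you treat the delicate step $\sum_{\ell}v(K_\ell)<v(N)$ more carefully than the paper, whose own proof simply asserts this strict inequality from the ``non-additive, up to component additivity'' hypothesis; your precise reading of that clause as non-additivity detected by some forest-induced partition is exactly what is needed to close the argument.
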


\begin{proof} 
Every forest is a partial digraph. In particular, as $\Gamma$ is quasi-strongly connected, every maximal forest is an arborescence by Proposition \ref{rootarbo}. Then, the AF measure of each forest is $v(N)$, and the productivity $Prod (N,v,{\Gamma})$, which is an average of all these numbers, should be again $v(N)$.

Conversely, assume that $v$ is non-additive, up to component additivity, and argue by contradiction that $\Gamma$ is not quasi-strongly connected. By Proposition \ref{rootarbo}, there exists a maximal forest in $\Gamma$ which possesses at least two arborescences. This implies, by the definition of productivity, that the average of the values of the arborescences is strictly smaller than $v(N)$. This concludes the proof.

\end{proof}

\begin{remark}

Observe that if the game is additive, the productivity is always $v(N)$, and in this case the necessity does not hold when the graph is not quasi-strongly connected.

\end{remark}

The importance of Theorem \ref{th:2} is that an efficient organisational structure does not require a hierarchical one, but rather a unique head from which a unique reporting chain extends to every other member of the organisation. 

\begin{example}
   The following organisational structure attains maximum productivity $v(N)$ without being an arborescence:

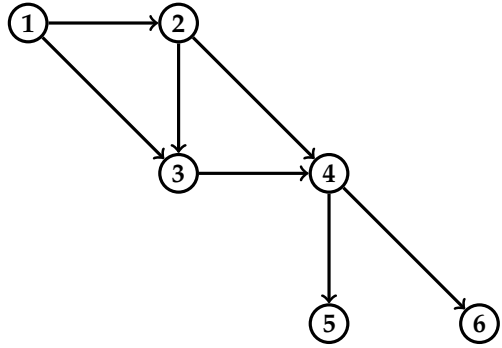
\begin{figure}[h]
\begin{center}
\begin{tikzpicture}[node_style/.style={draw,circle,minimum size=0.5cm,inner sep=1,font=\bf}]

\node[node_style] [line width=1.2pt] (uno) at (0,2) {1} ;
\node[node_style] [line width=1.2pt] (dos) at (2,2) {2} ;
\node[node_style] [line width=1.2pt] (tres) at (2,0) {3} ;
\node[node_style] [line width=1.2pt] (cuatro) at (4,0) {4} ;
\node[node_style] [line width=1.2pt](cinco) at (4,-2) {5} ;
\node[node_style] [line width=1.2pt](seis) at (6,-2) {6} ;

%Lines
\draw[->] [line width=1.2pt] (uno) to (dos);
\draw[->] [line width=1.2pt] (uno) to (tres);
\draw[->] [line width=1.2pt] (dos) to (tres);
\draw[->] [line width=1.2pt] (dos) to (cuatro);
\draw[->] [line width=1.2pt] (tres) to (cuatro);
\draw[->] [line width=1.2pt] (cuatro) to (cinco);
\draw[->] [line width=1.2pt] (cuatro) to (seis);
\end{tikzpicture}
%===========================================
\end{center}
\caption{Maximum productivity}
\label{productivity-example-max}
\end{figure}

\end{example}

\section{The effect of eliminating reporting relationships}
\label{delarc}

In this section we study the impact over the Average Forest measure when an arc is deleted. The results can also be used to study the effect of adding an arc $(i, j)$, which means deciding that $i$ must report to $j$. First we will check that it fails to verify the generalisation to this context of the property of inessential arc.

%{\color{red} revisar la literatura a ver si aparece mas veces el concepto de arco inesencial de esa forma.

%Eliminar no esencial no afecta a la productividad (mirarlo bien, cuando hay mas de una componente conexa en el forest maximal
%}

Given a digraph $\Gamma$ on $N$, an arc $(i,j)\in
\Gamma$ is an \emph{inessential arc} if $i\notin
S_{\Gamma}(j)$ and there exists $i'\in N$ such that $(i,i')\in\Gamma$, $i\notin S_{\Gamma}(i'),$ and $j\in S_{\Gamma}(i')$.
In words, an arc $(i,j)\in \Gamma$ is inessential if it is possible to
reach node $j$ from $i$ also by using a chain different  than the arc $(i,j)$. The absence of an inessential arc does not change the set of predecessors of any player.

A leadership measure $\xi$ on ${\cal G}_N^\Gamma$ possesses the \emph{inessential arc property} if for any digraph game $(v,\Gamma)\in{\cal G}_N^\Gamma$ and inessential arc $(i,j)\in\Gamma$ it holds that $\xi(N,v,\Gamma)= \xi(v,\Gamma \setminus \{(i,j)\})$.

The AF measure fails to verify the inessential arc property, and this fact is meaningful in this setting. Note that previous definition of inessential arc is a mathematical generalization of an inessential edge in a graph, in which the goal is to be connected regardless of the way in which connection is hold. This is not the case in the setting of team formation, where the specific reporting chain is crucial for the future of the team. In this framework, Notably over the Average Forest measure of each players deserves a carefully study. All proofs rely on how the collection ${\cal F}^{\Gamma}$ is constructed. Note that in this case the set  of sources in $\Gamma$ is given by $So (\Gamma) =\{ i\in N\, \vert \, \delta^{-}_{\Gamma} (i)=0\}$. Then,
\begin{equation}  
\vert {\cal F}^{\Gamma} \vert = \displaystyle \prod_{i \notin So (\Gamma)} \delta^{-}_{\Gamma} (i),
\label{cardinal-max-forests}
\end{equation}
and each of the maximal spanning forests $F\in {\cal F}^{\Gamma}$ is obtained by means of selecting one leader out of the set $\hat{P}_{\Gamma} (i)$ per each $i\notin So (\Gamma)$. This can be formally proved by induction. 

Note that this argument provides an algorithm to explicitly construct the maximal spanning forests of a graph, as illustrated in the following example.

\begin{example}
Let us consider for instance the following digraph $\Gamma$:
\begin{center}
\begin{tikzpicture}[scale=1, every node/.style={circle, draw, inner sep=2pt,font=\bf}]

\node[line width=1.2pt] (1) at (-0.2,1.75) {1} ;
\node[line width=1.2pt] (5) at (4.2,3.0) {5} ;
\node[line width=1.2pt] (4) at (3.25,1.90) {4} ;
\node[line width=1.2pt] (2) at (1.55,1.75) {2} ;
\node[line width=1.2pt] (3) at (0.75,0.5) {3} ;

\draw[->] [line width=1.2pt] (1) to (3);
\draw[->] [line width=1.2pt] (2) to (3);
\draw[->] [line width=1.2pt] (4) to (3);
\draw[->] [line width=1.2pt] (4) to (2);
\draw[->] [line width=1.2pt] (5) to (4);

\draw [line width=1.2pt] (1) edge[->,bend left=80]  (4);

\end{tikzpicture}

\end{center}
Then, the family ${\cal F}^{\Gamma}$ of spanning forest of the digraph $\Gamma$, depicted in Figure \ref{short-example-forests-family-figure}, is made up of six spanning forest $F_{i,j}$, where $i\in \hat{P}_{\Gamma} (4)$ denotes the player who leaders the work of player 4 in the corresponding hierarchy, and being $j\in \hat{P}_{\Gamma} (3)$ the player who leaders the work of 3.

\vspace*{3mm}

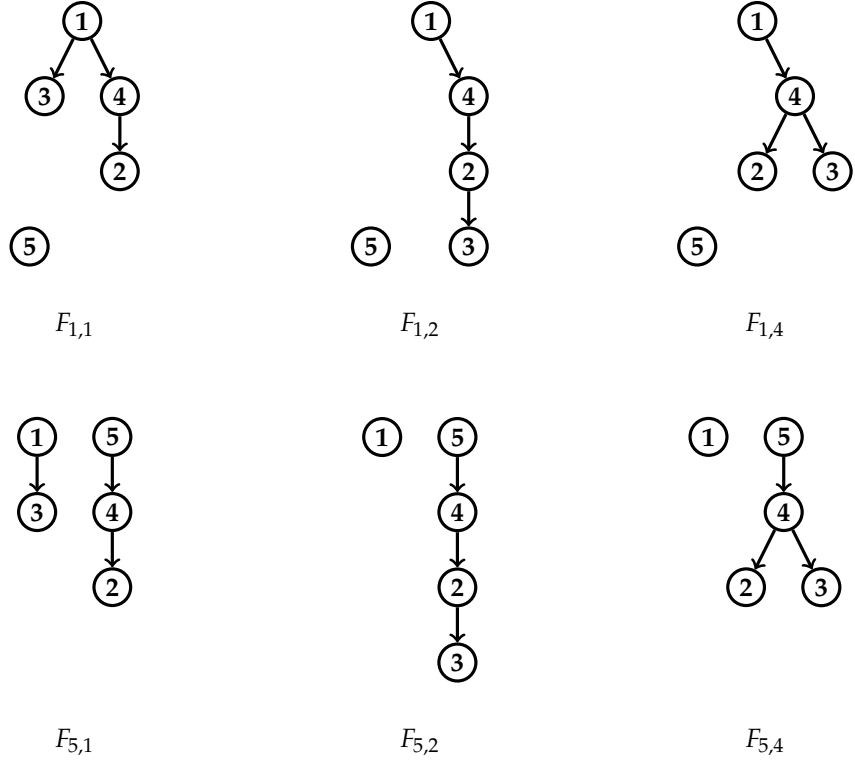
\begin{figure}[htbp]
\centering

% Primera fila
\begin{minipage}{0.3\textwidth}
\centering
\begin{tikzpicture}[scale=1, every node/.style={circle, draw, inner sep=2pt,font=\bf}]
\node [line width=1.2pt] (1) at (1,3) {1};
\node [line width=1.2pt] (4) at (1.5,2) {4};
\node [line width=1.2pt] (3) at (0.5,2) {3};
\node [line width=1.2pt] (2) at (1.5,1) {2};
\node [line width=1.2pt](5) at (0.3,0) {5};

\draw[->] [line width=1.2pt] (1) to (3);
\draw[->] [line width=1.2pt] (1) to (4);
\draw[->] [line width=1.2pt] (4) to (2);
\end{tikzpicture}
\[
F_{1,1}
\]
\end{minipage}
\begin{minipage}{0.3\textwidth}
\centering
\begin{tikzpicture}[scale=1, every node/.style={circle, draw, inner sep=2pt,font=\bf}]
\node [line width=1.2pt] (1) at (0,3) {1};
\node [line width=1.2pt] (4) at (0.5,2) {4};
\node [line width=1.2pt] (2) at (0.5,1) {2};
\node [line width=1.2pt] (3) at (0.5,0) {3};
\node [line width=1.2pt] (5) at (-0.8,0) {5};

\draw[->] [line width=1.2pt] (1) to (4);
\draw[->] [line width=1.2pt] (4) to (2);
\draw[->] [line width=1.2pt] (2) to (3);
\end{tikzpicture}
\[
F_{1,2}
\]
\end{minipage}
\begin{minipage}{0.3\textwidth}
\centering
\begin{tikzpicture}[scale=1, every node/.style={circle, draw, inner sep=2pt,font=\bf}]
\node [line width=1.2pt] (1) at (0,3) {1};
\node [line width=1.2pt] (4) at (0.5,2) {4};
\node [line width=1.2pt] (2) at (0.0,1) {2};
\node [line width=1.2pt] (3) at (1.0,1) {3};
\node [line width=1.2pt] (5) at (-0.8,0) {5};

\draw[->] [line width=1.2pt] (1) to (4);
\draw[->] [line width=1.2pt] (4) to (2);
\draw[->] [line width=1.2pt] (4) to (3);
\end{tikzpicture}
\[
F_{1,4}
\]
\end{minipage}

\vspace{1cm}

% Segunda fila
\begin{minipage}{0.3\textwidth}
\centering
\begin{tikzpicture}[scale=1, every node/.style={circle, draw, inner sep=2pt,font=\bf}]
\node [line width=1.2pt] (5) at (0.5,4) {5};
\node [line width=1.2pt] (4) at (0.5,3) {4};
\node [line width=1.2pt] (2) at (0.5,2) {2};
\node [line width=1.2pt] (1) at (-0.5,4) {1};
\node [line width=1.2pt] (3) at (-0.5,3) {3};
\node [line width=1.2pt,color=white] (6) at (-0.5,1) {{\color{white} 6}};

\draw[->] [line width=1.2pt] (1) to (3);
\draw[->] [line width=1.2pt] (5) to (4);
\draw[->] [line width=1.2pt] (4) to (2);
\end{tikzpicture}
\[
F_{5,1}
\]
\end{minipage}
\begin{minipage}{0.3\textwidth}
\centering
\begin{tikzpicture}[scale=1, every node/.style={circle, draw, inner sep=2pt,font=\bf}]
\node [line width=1.2pt] (5) at (0.5,4) {5};
\node [line width=1.2pt] (4) at (0.5,3) {4};
\node [line width=1.2pt] (2) at (0.5,2) {2};
\node [line width=1.2pt] (1) at (-0.5,4) {1};
\node [line width=1.2pt] (3) at (0.5,1) {3};

\draw[->] [line width=1.2pt] (2) to (3);
\draw[->] [line width=1.2pt] (5) to (4);
\draw[->] [line width=1.2pt] (4) to (2);
\end{tikzpicture}
\[
F_{5,2}
\]
\end{minipage}
\begin{minipage}{0.3\textwidth}
\centering
\begin{tikzpicture}[scale=1, every node/.style={circle, draw, inner sep=2pt,font=\bf}]
\node [line width=1.2pt] (5) at (0.5,4) {5};
\node [line width=1.2pt] (4) at (0.5,3) {4};
\node [line width=1.2pt] (2) at (0.0,2) {2};
\node [line width=1.2pt] (1) at (-0.5,4) {1};
\node [line width=1.2pt] (3) at (1.0,2) {3};
\node [line width=1.2pt,color=white] (6) at (-0.5,1) {{\color{white} 6}};

\draw[->] [line width=1.2pt] (4) to (3);
\draw[->] [line width=1.2pt] (5) to (4);
\draw[->] [line width=1.2pt] (4) to (2);
\end{tikzpicture}
\[
F_{5,4}
\]
\end{minipage}

\caption{Family of spanning forests. Constructing example.}
\label{short-example-forests-family-figure}
\end{figure}
\label{short-example-forests-family}
\end{example}

We first prove that the AF measure verifies the {\em successor equivalent} (SE) property introduced in (Khmelnitskaya, 2010). Let us recall its definition:

%to characterize, together with {\em component efficiency} (CE), the tree-value for rooted-trees.%

\begin{definition}
A leadership measure $\xi$ is {\em successor equivalent} (SE) if, for any organisational situation $(N,v,\Gamma)\in {\cal G}^N$, and for every arc $(i_0,j_0 )\in A_{\Gamma}$, it holds
\begin{equation}
\xi_k(N,v,\Gamma)=\xi_k(N,v,\Gamma^{-i_0j_0}), \text{ for all $k\in \overline{S}_{\Gamma} (j_0)$,}
\label{SE-property}
\end{equation}
where $\Gamma^{-i_0j_0}=\Gamma\setminus\{ (i_0,j_0)\}$.
\end{definition}

Our goal now is to check the Average Forest measure verifies this property. In order to do so, we will prove a more general result, from which this property will be obtained as a direct consequence.

To state this result, for a given digraph $\Gamma=(N, A_{\Gamma})$ and a vertex $k\in N$ we will define the {\em local digraph associated to $k$}, denoted as $\Gamma(k)=(SP_{\Gamma} (k), A(k))$, in the following way:
\begin{itemize}
\item The vertex set is given by $$SP_{\Gamma} (k) =\overline{S}_{\Gamma} (k) \cup \hat{P}_{\Gamma} (S_{\Gamma} (k))$$
\item The arc set is given by
$$A_{\Gamma}(k) =A_{\Gamma}\vert _{\overline{S}_{\Gamma} (k)} \cup \{ (i,j) \, \vert \, i\in \hat{P}_{\Gamma} (S_{\Gamma} (k)) \text{ and } j\in {S}_{\Gamma} (k)\}.$$
\end{itemize}
 
The following result shows that the Average Forest measure does not change for any vertex such that its local digraph is not affected by deleting an arc $(i_0, j_0)$.

\begin{proposition}
\label{successor}
Let $(N,\Gamma,v)\in {\cal G}^N$, and let $(i_0,j_0)$ be an arc in $A_{\Gamma}$. Then, for every player $k\in N$ for which
$\Gamma(k)=\Gamma^{-i_0j_0} (k)$ the following holds:
\begin{equation}
AF_k (N,v,\Gamma)=AF_k (N,v,\Gamma^{-i_0j_0}).
\label{SPE-equation}
\end{equation}
\label{SPE-proposition}
\end{proposition}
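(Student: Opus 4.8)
The plan is to show that the marginal contribution $m_k^{(v,\Gamma)}(F)$ of player $k$ in any maximal spanning forest $F$ depends only on the part of $F$ lying inside the local digraph $\Gamma(k)$, and then to exploit the product structure of ${\cal F}^\Gamma$ recorded in (\ref{cardinal-max-forests}) in order to rewrite $AF_k(N,v,\Gamma)$ as an average taken entirely over the maximal spanning forests of $\Gamma(k)$. Once $AF_k$ is expressed through $\Gamma(k)$ alone, the hypothesis $\Gamma(k)=\Gamma^{-i_0j_0}(k)$ yields (\ref{SPE-equation}) immediately.

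First I would analyse the local digraph. By (\ref{cardinal-max-forests}), each $F\in{\cal F}^\Gamma$ is precisely a choice of one immediate predecessor (``leader'') $\ell_F(j)\in\hat P_\Gamma(j)$ for every non-source node $j$. Every successor of $k$ is a non-source, while $k$ itself and every external immediate predecessor in $\hat P_\Gamma(S_\Gamma(k))\setminus\overline S_\Gamma(k)$ is a source of $\Gamma(k)$, since an incoming arc to any of them inside $\Gamma(k)$ would force a circuit. Hence the non-source nodes of $\Gamma(k)$ are exactly the elements of $S_\Gamma(k)$, each retaining its full in-degree, so that $\delta^-_{\Gamma(k)}(j)=\delta^-_\Gamma(j)$ and $|{\cal F}^{\Gamma(k)}|=\prod_{j\in S_\Gamma(k)}\delta^-_\Gamma(j)$; a maximal spanning forest of $\Gamma(k)$ is therefore nothing but a choice of leader for each $j\in S_\Gamma(k)$.

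The key step is to prove that $m_k^{(v,\Gamma)}(F)$ depends only on the leader choices restricted to $S_\Gamma(k)$. A node $j$ belongs to $\overline S^F(k)$ if and only if iterating $\ell_F$ from $j$ reaches $k$, and only successors of $k$ can ever do so; thus $\overline S^F(k)$, together with the partition $S^F(k)/F$ of its descendants into components, is determined recursively by the leaders $\ell_F(j)$ of the nodes $j\in S_\Gamma(k)$, all of which lie in $\hat P_\Gamma(j)\subseteq\hat P_\Gamma(S_\Gamma(k))$ and are recorded by arcs of $\Gamma(k)$. Since $m_k^{(v,\Gamma)}(F)=v(\overline S^F(k))-\sum_{C\in S^F(k)/F}v(C)$ evaluates $v$ only on these locally determined subsets of $\overline S_\Gamma(k)$, it equals the marginal contribution $m_k^{(v,\Gamma(k))}(G)$ of $k$ in the associated forest $G\in{\cal F}^{\Gamma(k)}$. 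Factoring the product in (\ref{cardinal-max-forests}) as $\prod_{j\in S_\Gamma(k)}\delta^-_\Gamma(j)$ times the irrelevant factor $\prod_{i\notin So(\Gamma),\,i\notin S_\Gamma(k)}\delta^-_\Gamma(i)$, and noting that each local configuration $G$ is extended to a global forest by exactly this same (constant) number of ways, the irrelevant factor cancels and
\begin{equation*}
AF_k(N,v,\Gamma)=\frac{1}{|{\cal F}^{\Gamma(k)}|}\sum_{G\in{\cal F}^{\Gamma(k)}}m_k^{(v,\Gamma(k))}(G).
\end{equation*}

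Finally, the identical reduction applied to $\Gamma^{-i_0j_0}$ expresses $AF_k(N,v,\Gamma^{-i_0j_0})$ through $\Gamma^{-i_0j_0}(k)$; under the hypothesis $\Gamma(k)=\Gamma^{-i_0j_0}(k)$ the two right-hand sides are literally the same average, giving (\ref{SPE-equation}). The main obstacle, and the only place needing genuine care, is the bookkeeping of the key step: verifying that ``being a descendant of $k$'' and the induced component partition depend only on arcs recorded inside $\Gamma(k)$, and that the extension count of each local forest is independent of the local configuration so that it factors cleanly out of the average. Everything else is routine once the product structure (\ref{cardinal-max-forests}) is invoked.
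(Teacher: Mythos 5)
Your proof is correct, and it follows a genuinely different decomposition from the one used in the paper. The paper partitions ${\cal F}^{\Gamma}$ by the team $T_k(F)$ (the sub-arborescence of $F$ rooted at $k$), asserts that the number of forests inducing a fixed team $T$ equals $\prod_{i\notin So(\Gamma),\, i\notin T}\delta^{-}_{\Gamma}(i)$, and then verifies that the relative weight of each team is the same in $\Gamma$ and in $\Gamma^{-i_0j_0}$, the factor $\delta^{-}_{\Gamma}(j_0)-1$ cancelling between numerator and denominator. You instead partition ${\cal F}^{\Gamma}$ by the full leader configuration on $S_{\Gamma}(k)$, i.e.\ by the induced maximal spanning forest $G$ of the local digraph $\Gamma(k)$, and this finer grouping buys two things. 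First, your fibers really are unconstrained products: every choice of leaders outside $S_{\Gamma}(k)$ is compatible with every $G$, so each fiber has the constant size $\prod_{i\notin So(\Gamma),\, i\notin S_{\Gamma}(k)}\delta^{-}_{\Gamma}(i)$. The paper's team-based count, by contrast, is not correct as stated, because a node outside $T$ is not free to choose a leader inside $T$ (doing so would enlarge the team of $k$): in Example~\ref{short-example-forests-family}, for $k=4$ and $T$ the arborescence $4\to 2$, the paper's formula gives $3$, while only the two forests $F_{1,1}$ and $F_{5,1}$ induce that team. (The paper's final cancellation nevertheless survives, because the hypothesis forces $\hat P_{\Gamma}(j_0)\cap \overline S_{\Gamma}(k)=\emptyset$, so the corrected counts for $\Gamma$ and $\Gamma^{-i_0j_0}$ still differ by the same uniform factor.) Second, your intermediate identity
\begin{equation*}
AF_k(N,v,\Gamma)=\frac{1}{|{\cal F}^{\Gamma(k)}|}\sum_{G\in{\cal F}^{\Gamma(k)}}m_k^{(v,\Gamma(k))}(G)
\end{equation*}
is a standalone localisation formula: it yields the proposition at once, and it also proves (rather than merely observes) the remark following the proposition, namely that $AF_k$ depends only on the local digraph $\Gamma(k)$. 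Two cosmetic points: the external vertices in $\hat P_{\Gamma}(S_{\Gamma}(k))\setminus \overline S_{\Gamma}(k)$ are sources of $\Gamma(k)$ simply because every arc of $A_{\Gamma}(k)$ has its head in $\overline S_{\Gamma}(k)$ (the circuit argument is only needed for $k$ itself), and you should state explicitly that $\Gamma^{-i_0j_0}$ is again circuit-free so that the same reduction applies to it; neither point affects the validity of your argument.
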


\begin{proof}
For each spanning forest $F\in {\cal F}^{\Gamma}$ let $T_{k} (F)$ be the maximal team of $F$ leaded by $k$, i.e., the sub-arborescence of $F$ rooted in $k$. That is $T_k(F)=F\vert \overline{S}_{F} (k)$. Now, denote by $T$ the team $T_k(F)$ for a fixed $F\in {\cal F}^{\Gamma}$. Then, the proof relies on counting the number of spanning forests $F'$ for which $T_k(F')$ coincides with the given team $T$; this is equal to the product:
$$
 \prod_{\substack{i \notin So(\Gamma) \\ i \notin T}} \delta^{-}_{\Gamma} (i).
$$
Denote by ${\cal T}_k ({\Gamma})$ the collection of sub-arborescences $\{ T_k(F)\, \vert\, F\in {\cal F}^{\Gamma}\}$, and let $(i_0,j_0)\in A_{\Gamma}$ such that  $\Gamma(k)=\Gamma^{-i_0j_0} (k)$. Then, ${\cal T}_k ({\Gamma})={\cal T}_k ({\Gamma}^{-i_0j_0})$,
 $j_0 \notin So ({\Gamma})$ and $j_0 \notin T_k(F)$ for each $T_k(F)\in {\cal T}_k ({\Gamma})$.  Thus, the number of spanning forests $F'\in {\cal F}^{\Gamma^{-i_0j_0}}$ for which $T_k(F')$ coincides with a given $T\in {\cal T}_k ({\Gamma}^{-i_0j_0})$,  is given by the product:
$$
 \prod_{\substack{i \notin So({\Gamma^{-i_0j_0}}) \\ i \notin T}} \delta^{-}_{\Gamma^{-i_0j_0}} (i) =
 \begin{cases}
(\delta^{-}_{\Gamma} (j_0) -1)  \prod_{\substack{i \notin So ({\Gamma}) \\ i \notin T, i\neq j_0}} \delta^{-}_{\Gamma} (i),
& \text{ if $\delta^{-}_{\Gamma} (j_0) > 1$,}
\\[4mm]
\prod_{\substack{i \notin So(\Gamma) \\ i \notin T, i\neq j_0}} \delta^{-}_{\Gamma} (i),
& \text{ if $\delta^{-}_{\Gamma} (j_0) = 1$.}
\end{cases}
$$
Moreover,  $m_k^{(v,\Gamma)} (F)=m_k^{(v,\Gamma^{-i_0j_0})} (F)$ for all $F$ such that $T_k(F)=T$. Therefore, if $\delta^{-}_{\Gamma} (j_0) =1$, then ${\cal F}^{\Gamma^{-i_0j_0}}=\{ F\setminus \{(i_0,j_0)\} \, \vert\, F\in {\cal F}^{\Gamma}\}$ and $AF_k(N,v,\Gamma)=AF_k(N,v,\Gamma^{-i_0j_0})$. Otherwise, if $\delta^{-}_{\Gamma} (j_0) > 1$, it follows
\begin{multline}
AF_k(N,v,\Gamma)=\sum_{T \in {\cal T}_k ({\Gamma})} \frac{ \prod_{\substack{i \notin So({\Gamma}) \\ i \notin T}} \delta^{-}_{\Gamma} (i)}{\prod_{i \notin So({\Gamma})} \delta^{-}_{\Gamma} (i)}
 m^{(v,\Gamma)} (T) =
\\
\sum_{T \in {\cal T}_k ({\Gamma}^{-i_0j_0})} \frac{ (\delta^{-}_{\Gamma} (j_0) -1)  \prod_{\substack{i \notin So({\Gamma}) \\ i \notin T, i\neq j_0}} \delta^{-}_{\Gamma} (i)}
{(\delta^{-}_{\Gamma} (j_0) -1) \prod_{\substack{i \notin So({\Gamma}) \\ i\neq j_0}} \delta^{-}_{\Gamma} (i)}
m^{(v,\Gamma^{-i_0j_0})} (T) =AF_k(N,v,\Gamma^{-i_0j_0}).
\end{multline}

\end{proof}

Observe that this result implies that deleting the arcs which do not belong to the local digraph of a vertex does not affect its Average Forest measure. In other words, the AF of a vertex depends only of its local digraph.

In particular, assuming $k=j_0$ and taking into account that $\Gamma(j_0)=\Gamma^{-i_0j_0} (j_0)$, the successor equivalent property follows immediately from the previous proposition:

\begin{corollary}
The Average Forest measure verifies successor equivalence.
\label{SE-proposition}
\end{corollary}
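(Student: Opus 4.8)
The plan is to deduce successor equivalence directly from Proposition~\ref{successor}. That proposition guarantees $AF_k(N,v,\Gamma)=AF_k(N,v,\Gamma^{-i_0j_0})$ for every vertex $k$ whose local digraph survives the deletion unchanged, i.e. whenever $\Gamma(k)=\Gamma^{-i_0j_0}(k)$. Since the SE property in \eqref{SE-property} concerns exactly the vertices $k\in\overline{S}_{\Gamma}(j_0)$, it suffices to verify that $\Gamma(k)=\Gamma^{-i_0j_0}(k)$ holds for every such $k$; the conclusion is then immediate by applying the proposition vertex by vertex.

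First I would isolate the single structural fact on which everything rests: because $\Gamma$ has no circuits and $k\in\overline{S}_{\Gamma}(j_0)$, the head $j_0$ of the deleted arc never belongs to $S_{\Gamma}(k)$. If $k=j_0$ this is just $j_0\notin S_{\Gamma}(j_0)$; if $k$ is a proper successor of $j_0$, then $j_0\in S_{\Gamma}(k)$ would close a path $j_0\to\cdots\to k\to\cdots\to j_0$ into a circuit. For the same reason $i_0\notin\overline{S}_{\Gamma}(k)$: the node $i_0$ is an immediate predecessor of $j_0$, and were it to lie downstream of $k$ (hence of $j_0$) it would again produce a circuit through $j_0$.

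Next I would use these two facts to show that the arc $(i_0,j_0)$ simply does not occur in $\Gamma(k)$ and that none of the sets defining $\Gamma(k)$ change under its removal. The arc cannot belong to $A_{\Gamma}\vert_{\overline{S}_{\Gamma}(k)}$, since that would force both $i_0,j_0\in\overline{S}_{\Gamma}(k)$, contradicting one of the two facts above; nor can it be a cross arc with head in $S_{\Gamma}(k)$, since that would force $j_0\in S_{\Gamma}(k)$. Moreover, no path issuing from $k$ can traverse an arc whose head $j_0$ is unreachable from $k$, so deleting $(i_0,j_0)$ leaves $S_{\Gamma}(k)$, and hence $\overline{S}_{\Gamma}(k)$, intact; and since $j_0\notin S_{\Gamma}(k)$, removing an arc pointing into $j_0$ does not alter the set $\hat{P}_{\Gamma}(S_{\Gamma}(k))$ of immediate predecessors of the proper successors. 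Consequently both the vertex set $SP_{\Gamma}(k)$ and the arc set $A_{\Gamma}(k)$ agree in $\Gamma$ and in $\Gamma^{-i_0j_0}$, giving $\Gamma(k)=\Gamma^{-i_0j_0}(k)$.

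Invoking Proposition~\ref{successor} for each $k\in\overline{S}_{\Gamma}(j_0)$ then delivers \eqref{SE-property}, which is precisely the SE property. The only genuine obstacle is the third paragraph: one must handle the boundary case $k=j_0$ (where $j_0\in\overline{S}_{\Gamma}(k)$, so the exclusion of the restricted arc rests on $i_0\notin\overline{S}_{\Gamma}(k)$) separately from the proper-successor case (where $j_0\notin\overline{S}_{\Gamma}(k)$ already suffices), taking care in both that circuit-freeness forbids the deleted arc from ever entering the local digraph. Everything else is routine bookkeeping against the definition of $\Gamma(k)$.
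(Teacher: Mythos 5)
Your proof is correct and takes essentially the same route as the paper: both deduce the corollary from Proposition~\ref{successor} by verifying that deleting $(i_0,j_0)$ leaves the local digraph $\Gamma(k)$ unchanged for every $k\in\overline{S}_{\Gamma}(j_0)$, using circuit-freeness to show the deleted arc never enters $\Gamma(k)$ nor alters the sets defining it. You are in fact slightly more thorough than the paper, which carries out this verification explicitly only for $k=j_0$, whereas you also treat the proper successors of $j_0$.
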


Let us now illustrate the situation by means of an example.

\begin{example}
Using again Example \ref{short-example-forests-family}, we see that the deletion arc $(1,3)$ does not affect the Average Forest measure of player 3, whereas deleting arc $(4,2)$ does not affect the average forest measure of agents 2 and 3. In the first case the forests $F_{1,1}$ and $F_{5,1}$ disappear, but $AF_3(v,\Gamma^{-13})=v(3)=AF_3(v,\Gamma)$. In the second one, the new family ${\cal F}^{\Gamma^{-42}}$ has the same number of elements with $F_{i,j}^{-42}=F_{i,j}\setminus (4,2)$ and being $m_2^{(v,\Gamma^{-42})}(F_{i,j}^{-42})=m_2^{(v,\Gamma)}(F_{i,j})$, and $m_3^{(v,\Gamma^{-42})}(F_{i,j}^{-42})=m_3^{(v,\Gamma)}(F_{i,j})$, for all $(F_{i,j}^{-42})$.
\end{example}

The previous results show which players remain unaffected when an arc $(i_0,j_0)$ is deleted. The next results show how the remaining agents could be affected depending on their relative position with respect the agents involved in the removed arc, $i_0$ and $j_0$, and the properties of the game. Note that only the predecessors of $j_0$ can be affected by the deletion of that arc.

In the sequel, we must distinguish between {\em direct} and {\em indirect} effects of removing an arc, as well as {\em conflictive} and {\em non-conflictive} effects. Roughly speaking, direct effects refer to how removing arc $(i_0,j_0)$ affects the direct predecessors of $j_0$, whereas indirect effects refer to how these direct effects spread through the remaining predecessors of $j_0$ via direct ones. On the other hand, non-conflictive effects refer to those cases in which all the players in every reporting chain from a given predecessor $k$ to $j_0$ undergo the same kind of effect, positive or negative. For instance, in Example \ref{short-example-forests-family}, deleting the arc $(1,3)$ has a direct positive effect over player 2; a direct and an indirect (via 2) effect over player 4, being both non-conflictive effects; an indirect positive effect (via 4 and 2) over player 5; and a direct negative effect over player 1, who also undergoes a conflictive positive indirect effect (via 4).
When the effects are non-conflictive, superadditivity guarantees positive direct effects, whereas convexity must be assumed in order to guarantee indirect positive effects.

Formally, we will refer to the set of {\em direct competitors} of $i_0$ over $j_0$, which we denote by $DC(i_0,j_0)$, as the set $DC(i_0,j_0):= \hat{P}_{\Gamma} (j_0)\setminus \{ i_0\}$. We also consider the set of {\em indirect competitors} of $i_0$ over $j_0$, which we denote by $IC(i_0,j_0)$ and define as the set of predecessors of the players $k$ such that $k\in DC(i_0,j_0)$.

\begin{example}
Formally, if we go back to Example \ref{short-example-forests-family}, $DC(1,3)=\{2,4\}$ and $IC(1,3)=\{1,4,5\}$. Proposition  \ref{direct-competitors-effect} and Proposition \ref{indirect-competitors-effect}, stated next, generalise the following relations between the Average Forest measure for the original digraph $(N,\Gamma)$ and the corresponding value of the measure in the digraph $(N,\Gamma^{-13})$. To be specific:
\begin{itemize}
\item[$1.$]
$DC(1,3)\setminus IC(1,3)=\{ 2\}$. Then, direct competition reduction effect over player 2, assures $AF_2 (N,v,\Gamma)\leq AF_2 (N,v,\Gamma^{-13})$ whenever $v$ is superadditive.
\item[$2.$]
$1\in IC(1,3)$. Thus, there are conflictive effects over player 1 and relation between $AF_1 (N,v,\Gamma)$ and $AF_1 (N,v,\Gamma^{-13})$ can not be established in general.
\item[$3.$]
$\overline{P}_{\Gamma} (1)=\{ 1\}$ and $IC(1,3)\setminus \overline{P}_{\Gamma} (1)=\{ 4,5\}$.  Then, indirect non conflictive competition reduction effect over players 4 and 5, assures $AF_4 (N,v,\Gamma)\leq AF_4 (N,v,\Gamma^{-13})$ and $AF_5 (N,v,\Gamma)\leq AF_5 (N,v,\Gamma^{-13})$ whenever $v$ is convex.
\item[$4.$]
Since $1\in IC(1,3)$, and therefore the direct negative effect over player 1 can be compensated by the indirect positive effect via player 4,  indirect negative effects can not also be guaranteed.
\end{itemize}
\end{example}

\begin{proposition}
Let $(N,v,\Gamma)\in{\cal G}^N$, and let $\Gamma^{-i_0j_0}=\Gamma\setminus \{ (i_0,j_0)\}$. Then, if the game $(N,v)$ is superadditive the following holds:
\begin{enumerate}[$(i)$]
\item
Direct competition reduction effect: for every $i_k\in DC(i_0,j_0)$ with $i_k\notin P^\Gamma (j)$, for all $j\in \hat{P}^\Gamma (j_0)$, we have $AF_{i_k} (N,v,\Gamma)\leq AF_{i_k} (N,v,\Gamma^{-i_0j_0})$.

%for every direct competitor $i_k$ of $i_0$ over $j_0$, who is not also an indirect competitor or a predecessor of $i_0$, i.e. $k\in DC(i_0,j_0)\setminus \bigl ( IC(i_0,j_0)\cup P_{\Gamma} (i_0)\bigr )$, we have $AF_k (v,\Gamma)\leq AF_k (v,\Gamma^{-i_0j_0})$.
\item
Direct subordinate reduction effect: if $i_0$ is not also an indirect competitor of itself over $j_0$, i.e., $i_0\notin IC(i_0,j_0)$,  then $AF_{i_0} (N,v,\Gamma)\geq AF_{i_0} (N,v,\Gamma^{-i_0j_0})$.
\end{enumerate}
\label{direct-competitors-effect}
\end{proposition}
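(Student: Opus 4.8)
The plan is to condition on the leader of $j_0$ and reduce both inequalities to a single pointwise consequence of superadditivity. For a maximal spanning forest $F$ let $\ell_F(j_0)\in\hat{P}_{\Gamma}(j_0)$ denote the immediate predecessor selected as the leader of $j_0$; by the parametrisation of maximal spanning forests by one such choice per non-source node (see \eqref{cardinal-max-forests} and the surrounding discussion, valid because $\Gamma$ is circuit-free) the elements of ${\cal F}^{\Gamma}$ are in bijection with pairs (base configuration $g$, choice of $\ell_F(j_0)$), where a base configuration fixes the leader of every non-source node other than $j_0$. Assume first $\delta^{-}_{\Gamma}(j_0)>1$, the only relevant case for $(i)$ since $DC(i_0,j_0)\neq\emptyset$ forces it. Deleting an arc that a forest does not use changes none of its teams, so ${\cal F}^{\Gamma^{-i_0j_0}}=\{F\in{\cal F}^{\Gamma}:\ell_F(j_0)\neq i_0\}$ and $m^{(v,\Gamma)}_h(F)=m^{(v,\Gamma^{-i_0j_0})}_h(F)$ for every such $F$ and every $h$. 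Writing $p,q$ for the number of forests with $\ell_F(j_0)=i_0$ and $\ell_F(j_0)\neq i_0$ respectively, and $A,B$ (resp.\ $A',B'$) for the sums of $m_{i_k}^{(v,\Gamma)}$ (resp.\ $m_{i_0}^{(v,\Gamma)}$) over these two sets, a short rearrangement of the two averages shows that $(i)$ is equivalent to $A/p\le B/q$ and $(ii)$ to $A'/p\ge B'/q$.

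The decisive observation is a two-valued behaviour of the relevant marginal contribution along each base configuration $g$. Let $T_{j_0}$ be the sub-arborescence rooted at $j_0$ and, for the player $i$ in question, let $T^-$ be $i$'s team computed without $j_0$'s subtree; neither depends on $\ell_F(j_0)$. Under the positional hypothesis of each part — $i_k\notin P^{\Gamma}(j)$ for all $j\in\hat{P}_{\Gamma}(j_0)$ in $(i)$, and $i_0\notin IC(i_0,j_0)$ in $(ii)$ — the node $j_0$ belongs to $i$'s team if and only if $i$ itself leads $j_0$: any other leader $i_m\in\hat{P}_{\Gamma}(j_0)$ of $j_0$ would require $i\in P^{\Gamma}(i_m)$, which the hypothesis forbids, so no alternative reporting chain from $i$ to $j_0$ survives. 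Hence $m_i^{(v,\Gamma)}$ equals the ``low'' value $m_{\mathrm{low}}(g):=v(T^-)-\Sigma(g)$ when $\ell_F(j_0)\neq i$ and the ``high'' value $m_{\mathrm{high}}(g):=v(T^-\cup T_{j_0})-v(T_{j_0})-\Sigma(g)$ when $\ell_F(j_0)=i$, where $\Sigma(g)=\sum_{c}v(\bar S^F(c))$ runs over the children $c\neq j_0$ of $i$. Their difference,
$$
m_{\mathrm{high}}(g)-m_{\mathrm{low}}(g)=v(T^-\cup T_{j_0})-v(T^-)-v(T_{j_0})\ \ge\ 0,
$$
is non-negative by superadditivity of $v$ on the disjoint coalitions $T^-$ and $T_{j_0}$.

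It remains to sum these per-configuration comparisons; let $\nu$ be the number of base configurations. In $(i)$ the player is the competitor $i_k$: for each $g$ exactly the choice $\ell_F(j_0)=i_k$ (which lies in the $q$-set) yields the high value, while $\ell_F(j_0)=i_0$ and the remaining competitor choices yield the low value, so $pB-qA=\nu\sum_g\bigl(m_{\mathrm{high}}(g)-m_{\mathrm{low}}(g)\bigr)\ge0$, hence $A/p\le B/q$. In $(ii)$ the player is $i_0$, and now $\ell_F(j_0)=i_0$ is precisely the conditioning event, so $A'$ collects the high values and $B'$ the low ones, giving $qA'-pB'=(\delta^{-}_{\Gamma}(j_0)-1)\,\nu\sum_g\bigl(m_{\mathrm{high}}(g)-m_{\mathrm{low}}(g)\bigr)\ge0$, hence $A'/p\ge B'/q$. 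The residual case $\delta^{-}_{\Gamma}(j_0)=1$ of $(ii)$ is covered by the bijection $F\mapsto F\setminus\{(i_0,j_0)\}$ of Proposition \ref{successor}, under which $j_0$ detaches from $i_0$'s team and becomes a root, lowering each forest's contribution from the high to the low value, so the inequality persists.

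I expect the main obstacle to be the combinatorial invariance underlying the second paragraph: proving that $T^-$ and $T_{j_0}$ are genuinely independent of the leader of $j_0$ and that $j_0$ can enter $i$'s team only as a direct child. This is exactly where the hypotheses $i_k\notin P^{\Gamma}(j)$ and $i_0\notin IC(i_0,j_0)$ are indispensable, and it is the point at which one must rule out every indirect reporting chain linking $i$ to $j_0$; once it is secured, the passage to superadditivity and the averaging are routine bookkeeping.
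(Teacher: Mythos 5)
Your proof is correct and follows essentially the same route as the paper's: both partition the maximal spanning forests according to which immediate predecessor leads $j_0$ (equivalently, the pairing of a base configuration with a leader choice), observe that under the positional hypotheses the affected player's marginal contribution takes only a ``high'' value (when that player itself leads $j_0$) and a single ``low'' value (otherwise), with the difference nonnegative by superadditivity of $v$ on the disjoint teams, and conclude by comparing the reweighted averages before and after deleting $(i_0,j_0)$. Your explicit treatment of the residual case $\delta^{-}_{\Gamma}(j_0)=1$ in part $(ii)$ is a small bonus that the paper's displayed formulas pass over silently.
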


\begin{proof}
Let $\hat{P}^\Gamma (j_0)=\{ i_0,i_1,\dots,i_{\delta^-_{\Gamma} (j_0)-1}\}$. The proof relies on the partition of ${\cal F}^\Gamma$ given by:
$$
{\cal F}^\Gamma_k=\{ F\in {\cal F}^\Gamma \text{ s.t. } (i_k,j_0)\in F\}, \quad k=0,1,\dots, \delta^-_{\Gamma} (j_0)-1.
$$
Note that for all $i_k\neq i_0$ with $i_k\notin P^\Gamma (j)$, for all $j\in \hat{P}^\Gamma (j_0)$, the following holds:
\begin{align}
m_{i_k}(F_k) &=v(T_{i_k} (F_k))-\sum_{j\in \hat{S}^{F_k} (i_k) } v(T_j(F_k))=
\label{1}
\\[2mm]
 & =
 \displaystyle v( {\displaystyle\cup_{j\in \hat{S}^{F_k}(i_k) \setminus j_0}}  T_{j}(F_k) \cup T_{j_0} (F_k) \cup i_k) - v( T_{j_0} (F_k)) -\sum_{j\in {\hat{S}^{F_k}(i_k) \setminus j_0}} v(T_{i_k}(F_k),
 \label{2}
\end{align}
and, taking $F_0=F_k\setminus (i_k,j_0)\cup (i_0,j_0)\in {\cal F}^\Gamma_0$, it is verified $\hat{S}^{F_0} (i_k)=\hat{S}^{F_k}(i_k) \setminus j_0$ and $ T_{i_k} (F_0)=T_{i_k} (F_k) \setminus T_{j_0} (F_k)$, being $T_{j} (F_k)=T_{j} (F_0)$, for all $j\in \hat{S}^{F_k} (i_k)$. Therefore:
\begin{align}
m_{i_k}(F_0)& =v(T_{i_k} (F_0))-\sum_{j\in \hat{S}^{F_0} (i_k) } v(T_j(F_0))=
\label{3}
\\[2mm]
& =v( {\displaystyle\cup_{j\in \hat{S}^{F_k}(i_k) \setminus j_0}}  T_{i_k}(F_k) \cup i_k) - \sum_{j\in {\hat{S}^{\Gamma}(i_k) \setminus j_0}} v(T_{i_k}(F_k)).
\label{4}
\end{align}

Remark that $F_t\in {\cal F}^\Gamma_t \Leftrightarrow
F_t\setminus (i_t,j_0)\cup (i_s,j_0)\in {\cal F}^\Gamma_s$, for every $s,t$. Thus, denoting by $
{\cal F}^\Gamma_{-j_0}:= \{ F_t\setminus (i_0,j_0)\, \vert \, F_0\in {\cal F}^\Gamma_0\}$, the following is true ${\cal F}^\Gamma_t=\{ (i_t,j_0)\cup F\, \vert\, F\in {\cal F}^\Gamma_{-j_0}\}$, for  all $t$.

Analogously, a partition of ${\cal F}^{\Gamma^{-i_0j_0}}$ can be defined, with ${\cal F}^{\Gamma^{-i_0j_0}}_t={\cal F}^{\Gamma}_t=\{ (i_t,j_0)\cup F\, \vert\, F\in {\cal F}^\Gamma_{-j_0}\}$, for all $t=1,\dots, \delta^-_\Gamma(j_0)$.

Thus, $AF_{i_k}(N,v,\Gamma)$ and $AF_{i_k}(N,v,\Gamma^{-i_0j_0})$, can be computed as follows:

\begin{align}
AF_{i_k}(N,v,\Gamma)& =\sum_{t=0}^{\delta^-_{\Gamma} (j_0)-1} \sum_{F\in {\cal F}^{\Gamma}_{-j_0}}p_{\Gamma}\{\widetilde{F}=F\cup (i_t,j_0)\} m_{i_k} (F \cup (i_t,j_0)), 
\\
AF_{i_k}(N,v,\Gamma^{-i_0j_0})& =\sum_{t=1}^{\delta^-_{\Gamma} (j_0)-1} \sum_{F\in {\cal F}^{\Gamma}_{-j_0}}p_{\Gamma^{-i_0j_0}}\{\widetilde{F}=F\cup (i_t,j_0)\} m_{i_k} (F \cup (i_t,j_0)), 
\end{align}
where the involved probabilities for the formation of (uniformly) random maximal forests are given by:
\begin{align*}
p_{\Gamma}\{\widetilde{F}=F_t\}& = \prod_{i\notin So(\Gamma)}
\frac{1}{\delta^{-}_{\Gamma} (i)}=\frac{1}{\delta^{-}_{\Gamma} (j_0)}\prod_{\substack{i\notin So(\Gamma)\\ i\neq j_0}}
\frac{1}{\delta^{-}_{\Gamma} (i)},
\\
p_{\Gamma^{-i_0j_0}}\{\widetilde{F}=F_t\}& = \prod_{i\notin So(\Gamma^{-i_0j_0})}
\frac{1}{\delta^{-}_{\Gamma^{-i_0j_0}} (i)}=\frac{1}{\delta^{-}_{\Gamma} (j_0)-1}\prod_{\substack{i\notin So(\Gamma)\\ i\neq j_0}}
\frac{1}{\delta^{-}_{\Gamma} (i)}.
\end{align*}
Then, if we denote $p_F=\prod_{\substack{i \notin So(\Gamma) \\ i \neq j_0}} \frac{1}{\delta^{-}_{\Gamma} (i)}$, and take into account the following facts:
\begin{itemize}
  \item[(a)] for all $t\neq k $, if $F_0=F\cup (i_0,j_0)\in {\cal F}^\Gamma_0$ and $F_t=F\cup (i_t,j_0)\in {\cal F}^\Gamma_t$, then $m_{i_k}(F_t)=m_{i_k}(F_0)$,
    \item[(b)] from \eqref{1} to \eqref{4}, superadditivity implies $m_{i_k}(F_k)\geq m_{i_k}(F_0)$,  
\end{itemize}
the following inequality holds after easy algebraic manipulations noting (b), and thus $(i)$ is true:
\begin{multline*}
AF_{i_k}(N,v,\Gamma) =\frac{\delta^-_{\Gamma} (j_0)-1}{\delta^-_{\Gamma} (j_0)}
\sum_{\substack{F\in {\cal F}^{\Gamma}_{-j_0} \\ F_0=F\cup (i_0,j_0)}} p_F m_{i_k} (F_0) +
 \frac{1}{\delta^-_{\Gamma} (j_0)}
\sum_{\substack{F\in {\cal F}^{\Gamma}_{-j_0} \\ F_k=F\cup (i_k,j_0)}} p_F m_{i_k} (F_k) \leq 
\\[2mm]
\leq \frac{\delta^-_{\Gamma} (j_0)-2}{\delta^-_{\Gamma} (j_0)-1}
\sum_{\substack{F\in {\cal F}^{\Gamma}_{-j_0} \\ F_0=F\cup (i_0,j_0)}} p_F m_{i_k} (F_0) +
 \frac{1}{\delta^-_{\Gamma} (j_0)-1}
\sum_{\substack{F\in {\cal F}^{\Gamma}_{-j_0} \\ F_k=F\cup (i_k,j_0)}} p_F m_{i_k} (F_k) = AF_{i_k}(N,v,\Gamma^{-i_0j_0}).
\end{multline*}

In order to prove $(ii)$, the same reasoning applies taking into account that also $m_{i_0}(F_t)=m_{i_0}(F_k)$, for all $t,k\in \{1,\dots, \delta^-_{\Gamma} (j_0)-1\}$, and $m_{i_0} (F_0)\geq m_{i_0} (F_k)$, 
\begin{multline*}
AF_{i_0}(N,v,\Gamma) =\frac{\delta^-_{\Gamma} (j_0)-1}{\delta^-_{\Gamma} (j_0)}
\sum_{\substack{F\in {\cal F}^{\Gamma}_{-j_0} \\ F_k=F\cup (i_k,j_0)}} p_F m_{i_0} (F_k) +
 \frac{1}{\delta^-_{\Gamma} (j_0)}
\sum_{\substack{F\in {\cal F}^{\Gamma}_{-j_0} \\ F_0=F\cup (i_0,j_0)}} p_F m_{i_0} (F_0) \geq 
\\[2mm]
\geq \frac{\delta^-_{\Gamma} (j_0)-1}{\delta^-_{\Gamma} (j_0)-1}
\sum_{\substack{F\in {\cal F}^{\Gamma}_{-j_0} \\ F_k=F\cup (i_k,j_0)}} p_F m_{i_0} (F_k)  = AF_{i_0}(N,v,\Gamma^{-i_0j_0}).
\end{multline*}

\end{proof}

\begin{proposition}
Let $(N,v,\Gamma)\in{\cal G}^N$, and let $\Gamma^{-i_0j_0}=\Gamma\setminus \{ (i_0,j_0)\}$. Then, if the game $(N,v)$ is convex the following holds:
\begin{itemize}
\item[$(iii)$]
Indirect competition reduction effect: for every indirect competitor $k$ of $i_0$ over $j_0$ who is not also a predecessor of $i_0$, i.e. $k\in IC(i_0,j_0)\setminus \overline{P}_{\Gamma} (i_0)$, we have $AF_{k} (N,v,\Gamma)\leq AF_{k} (N,v,\Gamma^{-i_0j_0})$.
\item[$(iv)$]
Indirect subordinate reduction effect: if $i_0\notin IC(i_0,j_0)$  then $AF_{k} (N,v,\Gamma)\geq AF_{k} (N,v,\Gamma^{-i_0j_0})$, for every $k\in P_{\Gamma} (i_0)\setminus IC(i_0,j_0)$.
\end{itemize}
\label{indirect-competitors-effect}
\end{proposition}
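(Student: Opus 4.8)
The plan is to build on the probabilistic reformulation already used in the proof of Proposition~\ref{direct-competitors-effect}. Write $d=\delta^{-}_{\Gamma}(j_0)$ and $\hat P_{\Gamma}(j_0)=\{i_0,i_1,\dots,i_{d-1}\}$, so that the direct competitors are $i_1,\dots,i_{d-1}$. By \eqref{cardinal-max-forests}, every maximal spanning forest is obtained by selecting one immediate predecessor for each non-source node; I will factor this choice as a pair $(\omega,i_t)$, where $i_t$ is the leader selected for $j_0$ and $\omega$ encodes the leader chosen for every other non-source node. Under the uniform law on ${\cal F}^{\Gamma}$ these two ingredients are independent, with $i_t$ uniform on $\{i_0,\dots,i_{d-1}\}$; moreover deleting $(i_0,j_0)$ leaves $\omega$ untouched and merely removes the alternative $t=0$, so ${\cal F}^{\Gamma^{-i_0j_0}}$ is indexed by the same set of $\omega$'s together with $t\in\{1,\dots,d-1\}$. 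Note also that the subtree rooted at $j_0$, call it $T_{j_0}(\omega)$, and whether $k$ is an ancestor of a given $i_t$ in the resulting forest, are both determined by $\omega$ alone.

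The crux of the argument is a single convexity inequality. Fix $\omega$ and suppose the chosen leader $i_t$ of $j_0$ lies in the subtree rooted at $k$; then $T_{j_0}(\omega)$ is attached, inside $\bar S^F(k)$, to the immediate-successor component $C_t\subseteq\bar S^F(k)$ of $k$ that contains $i_t$. Comparing this with the situation in which $k$ does not capture $j_0$, the marginal contribution of $k$ changes by
$$
\bigl[v(\,\bar S^F(k)\,)-v(\,\bar S^F(k)\setminus T_{j_0}(\omega)\,)\bigr]-\bigl[v(\,C_t\cup T_{j_0}(\omega)\,)-v(\,C_t\,)\bigr].
$$
Since $C_t\subseteq\bar S^F(k)\setminus T_{j_0}(\omega)$ and $T_{j_0}(\omega)$ is disjoint from both, convexity of $v$ (increasing marginal returns) makes this difference non-negative. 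Writing $m_k^{+}(\omega)$ and $m_k^{-}(\omega)$ for the marginal contribution of $k$ with and without $T_{j_0}(\omega)$ in its team, this says precisely that capturing $j_0$'s subtree can only raise $k$'s marginal contribution. This is the step I expect to be the main obstacle, both in getting the attachment bookkeeping exactly right and in recognising that it is convexity, rather than mere superadditivity, that is needed in order to pass from a direct to an indirect effect.

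For part $(iii)$, the hypothesis $k\in IC(i_0,j_0)\setminus\overline P_{\Gamma}(i_0)$ guarantees that $k$ is never an ancestor of $i_0$, so at $t=0$ player $k$ does not capture $j_0$ and contributes $m_k^{-}(\omega)$, while for each $t\ge 1$ it contributes a value at least $m_k^{-}(\omega)$ (with equality unless $k$ captures $j_0$, in which case the convexity inequality applies). Hence $m_k^{-}(\omega)$ is at most the average of the $d-1$ contributions with $t\ge 1$, and summing over $\omega$ with the weights $\tfrac1{d}$ (for $\Gamma$) and $\tfrac1{d-1}$ (for $\Gamma^{-i_0j_0}$) yields $AF_k(N,v,\Gamma)\le AF_k(N,v,\Gamma^{-i_0j_0})$ after a one-line manipulation.

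Part $(iv)$ is the mirror image. When $k\in P_{\Gamma}(i_0)\setminus IC(i_0,j_0)$ and $i_0\notin IC(i_0,j_0)$, player $k$ can reach $j_0$ only through $i_0$, so $k$ captures $j_0$ exactly at $t=0$ (and only for those $\omega$ in which $k$ is an ancestor of $i_0$), contributing a value at least $m_k^{-}(\omega)$, whereas every $t\ge 1$ yields exactly $m_k^{-}(\omega)$. The same averaging, now with the single large term sitting inside the $\Gamma$-average, reverses the inequality to $AF_k(N,v,\Gamma)\ge AF_k(N,v,\Gamma^{-i_0j_0})$. In both cases the non-conflict conditions on $k$ are exactly what force every reporting chain from $k$ to $j_0$ to carry an effect of a single sign, which is what makes the averaging argument conclusive.
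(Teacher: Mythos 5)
Your proof is correct and takes essentially the same route as the paper's: your $(\omega,i_t)$ factorization is exactly the paper's partition of ${\cal F}^{\Gamma}$ into the classes ${\cal F}^{\Gamma}_t$ indexed by the chosen leader of $j_0$, compared row by row under the weights $1/\delta^-_{\Gamma}(j_0)$ versus $1/(\delta^-_{\Gamma}(j_0)-1)$, and your attachment inequality is precisely the pointwise convexity fact the paper records as $m_k(F_t)\geq m_k(F_0)$ for $t\in B_F(k)$ (with equality otherwise). If anything, you make explicit the increasing-marginal-returns computation that the paper only asserts, but the decomposition, the key inequality, and the averaging step coincide.
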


Note that condition $(iv)$ also holds for every direct competitor of $i_0$ over $j_0$ that is not a predecessor of $i_0$, generalising condition $(i)$ to direct competitors that are also predecessors of other direct competitors.

\begin{example}
Before proving the previous proposition, let us reconsider Example \ref{short-example-forests-family}, to show that convexity is a necessary condition for ensuring that the statements $(iii)$ and $(iv)$ are true. Let $(N,v)$, given by $v(i)=0$, $\forall\, i$, $v(S)=0$, $\forall \, S\subseteq \{1,2,4\}$, $v(\{3,5\})=5$, $v(S\cup 3)=v(S\cup 5)=3 +(s+1)^2$, $v(S\cup \{3,5\})=5+(s+2)^2$,
$\forall \, \emptyset\neq S\subseteq \{1,2,4\}$.

Now, deleting arc $(1,3)$, the partitions involved in Proposition \ref{direct-competitors-effect} are given by:
\begin{align}
{\cal F}^\Gamma & = {\cal F}_1^\Gamma \cup {\cal F}_2^\Gamma \cup {\cal F}_4^\Gamma,
\\
{\cal F}^{\Gamma^{-13}} & = {\cal F}_2^\Gamma \cup {\cal F}_4^\Gamma,
\end{align}
where ${\cal F}_1^\Gamma =\{ F_{1,1}, F_{5,1}\}$, ${\cal F}_2^\Gamma =\{ F_{1,2}, F_{5,2}\}$, and ${\cal F}_4^\Gamma =\{ F_{1,4}, F_{5,4}\}$.

In this case, player 2 is a direct competitor of 1 over 3 that is not a predecessor of 1 nor 4, then superadditivity of $v$ ensures condition $(i)$ and $AF_{2} (v,\Gamma)\leq AF_{2} (v,\Gamma^{-13})$. Analogously, since
$$
MC_4(\{2\})=v(\{4,2\})-v(\{2\})= 0 < 5= v(\{4,2,3\})-v(\{2,3\})= MC_4(\{2,3\}),
$$
$AF_{4} (N,v,\Gamma) < AF_{4} (N,v,\Gamma^{-13})$. On the contrary, since 
$$MC_5(\{4,2\})=v(\{5,4,2\})-v(\{4,2\})= 12 > 9= v(\{5,4,2,3\})-v(\{4,2,3\})= MC_5(\{4,2,3\}),
$$
$AF_{5} (N,v,\Gamma) > AF_{5} (N,v,\Gamma^{-13})$. 
\label{example-convexity}
\end{example}

\begin{proof}
%The conditions $(iii)$ and $(iv)$ require a more subtle approach. 
The proof relies on the same partitions of ${\cal F}^\Gamma$ and ${\cal F}^{\Gamma^{-i_0j_0}}$.

Let us prove $(iii)$. Let $k$ be an indirected competitor of $i_0$ over $j_0$ who is not also a predecessor of $i_0$.  Let $F_0\in {\cal F}^\Gamma_0$ and consider $F_0\setminus (i_0,j_0)\cup (i_t,j_0)\in {\cal F}^\Gamma_t$, for every $t\in \hat{P}^\Gamma (j_0)\setminus i_0$. Define $B_F(k)\subseteq \hat{P}^{\Gamma} (j_0)$ be the set of direct predecessors of $j_0$ in the original digraph $\Gamma$ such that $k\in \hat{P}^{F_t} (j_0)$, and denote as $b_F(k)$ its cardinality. Note that in this case $B_F(k)\subseteq DC(i_0,j_0)$, since $k$ is not also a predecessor of $i_0$.

Then, 
convexity and marginal contributions definition, assures that:
\begin{itemize}
  \item[(a)] for all $t\in B_F(k)$, then $m_{k}(F_t) \geq m_{k}(F_0)$,
    \item[(b)]  for all $t\in \hat{P}^\Gamma(j_0)\setminus B_F(k)$, then $m_{k}(F_t) = m_{k}(F_0)$.  
\end{itemize}

Thus, for each $F_0\in {\cal F}^\Gamma$ and $F_t$, $t\in  \hat{P}^\Gamma(j_0)\setminus i_0$, 
the following inequality holds after easy algebraic manipulations noting (a) and (b), and thus $(iii)$ is true:
\begin{multline*}
\sum_{s=0}^{\delta^-_{\Gamma} (j_0)-1} p_\Gamma\{\widetilde{F}=F_s\} m_k(F_s)=\frac{\delta^-_{\Gamma} (j_0)-b_F(k)}{\delta^-_{\Gamma} (j_0)} p_F m_{k} (F_0) +
 \frac{1}{\delta^-_{\Gamma} (j_0)}
p_F \sum_{t\in B_F(k)}  m_{k} (F_t) \leq 
\\[2mm]
\leq \frac{\delta^-_{\Gamma} (j_0)-b_F(k)-1}{\delta^-_{\Gamma} (j_0)-1}
p_F m_{k} (F_0) +
 \frac{1}{\delta^-_{\Gamma} (j_0)-1} p_F
\sum_{t\in B_F(k)}  m_{k} (F_t)=\sum_{s=1}^{\delta^-_{\Gamma} (j_0)-1} p_{\Gamma^{-i_0j_0}} \{\widetilde{F}=F_s\} m_k(F_s),
\end{multline*}
where $p_F=\prod_{\substack{i \notin So(\Gamma) \\ i \neq j_0}} \frac{1}{\delta^{-}_{\Gamma} (i)}$.

In order to prove $(iv)$ note that if $i_0\notin IC(i_0,j_0)$, and $k\in P_{\Gamma} (i_0)\setminus IC(i_0,j_0)$, then, for any given collection of maximal spanning forest $\{F_s, s=0,\dots, \delta^-_\Gamma (j_0)-1\}$, $B_F(k)=\{ i_0\}$. Therefore, taking into account that convexity implies $m_k(F_0)\geq m_k(F_t)$, for all $t\neq 0$:

\begin{multline*}
\sum_{s=0}^{\delta^-_{\Gamma} (j_0)-1} p_\Gamma\{\widetilde{F}=F_s\} m_k(F_s)=\frac{1}{\delta^-_{\Gamma} (j_0)} p_F m_{k} (F_0) +
 \frac{1}{\delta^-_{\Gamma} (j_0)}
p_F \sum_{t=1}^{\delta^-_\Gamma(j_0)-1}  m_{k} (F_t) \geq 
\\[2mm]
\geq \frac{1}{\delta^-_{\Gamma} (j_0)-1}
\sum_{t=1}^{\delta^-_\Gamma(j_0)-1}  m_{k} (F_t)=\sum_{s=1}^{\delta^-_{\Gamma} (j_0)-1} p_{\Gamma^{-i_0j_0}} \{\widetilde{F}=F_s\} m_k(F_s).
\end{multline*}

\end{proof}

The predecessors of $j_0$ not considered in the previous propositions undergo a conflictive effect. In fact, some of their subordinates lie in cases $(i)$ or $(iii)$, whereas some others lie in $(ii)$ or $(iv)$. Thus, the deletion of arc $(i_0,j_0)$ causes a positive, but also a negative, effect over them. The balance between those conflictive effects depends crucially of the specific values of the measures of the involved coalitions. We must remark that if the underlying digraph $\Gamma=(N,A)$ is cycle-free then there are no conflictive effects and Propositions \ref{direct-competitors-effect} and \ref{indirect-competitors-effect} describe the effect over all predecessors of $j_0$.

%{\red Cuidadin porque puede no ser cie}

%{\red To be analysed: not only the nature of the effect (positive or negative), but the amount of the effect. Are we able to quantify the losses and the gains?}

%\subsection{Monotonicity properties and incentives}

%{\color{red}  Lo que hemos escrito esta manyana sobre el AF-value para los juegos de unanimidad (tambi\'en est\'a en la foto que he enviado al grupo:

%Ax. 1: Si en la coalici\'on hay uno y s\'olo un agente que domina a los dem\'as (jefe), se lo lleva todo.

%Ax. 2: En otro caso, se les puede organisar a los de la coalici\'on en varios grupos tales que en cada uno hay un s\'olo jefe. En este caso el beneficio se reparte por igual entre los nodos que son jefes inmediatos de los jefes de la coalic\'on, produciendo ineficiencia si hay m\'as de un jefe inmediato. M\'as concretamente hay tres casos:

%2.1 Un jefe externo que domina a todos los jefes de la coalici\'on y se lo lleva todo.

%2.2 M\'as de un jefe inmediato. En este caso se lo reparten a partes iguales y se produce ineficiencia.

%2.3 No hay jefes externos, y en este caso todos cero.

\section{Computation of the Average Forest measure}

Enumerating all spanning trees/forests of an acyclic digraph is computationally intensive (can be exponential in number), but not NP-hard. Therefore, as proposed for other cooperative games values (Castro et al., 2009), we can estimate the $AF$ measure by sampling methods.

First, it is necessary to randomly generate a sample of forests; then, to compute the vector of marginal contributions for each of the sampled forests and, finally, to compute the mean of the obtained marginal contributions for each node.

For the generation of the sample of forests, $S({\cal F}^{\Gamma})$, we can follow the constructive method introduced in section \ref{delarc} and described in Algorithm \ref{alg:F_sampling}. Then, we can use common techniques of simple sampling with replacement for obtaining the sample size $k$ required for a given precision.

\begin{algorithm}
\caption{[$\Gamma=(N,A)$] = $F\in {\cal F}^\Gamma$ random generation }
\label{alg:F_sampling}
\begin{algorithmic}[1]
\REQUIRE Given $N$ and $A$.
  \STATE Initialize $F=(N,A_F)$ with all arcs $(i,j)\in A$ with $\delta^-_\Gamma(j)=1$
   \FORALL{$j\in N$ s.t. $\delta^-_\Gamma(j)>1$:}
     \STATE Select randomly a direct predecessor $i\in \hat{P}^\Gamma (j)$
     \STATE $A_F\leftarrow A_F\cup (i,j)$   
   \ENDFOR
   \end{algorithmic}
   \end{algorithm}

%As we say above, the cardinal of the finite population is given by expression \eqref{cardinal-max-forests}. 

To evaluate marginal contributions, we propose the Algorithm \ref{alg:mc_calculus}. 
\begin{algorithm}
\caption{[$(N,v,F)$] = $m^{(v,\Gamma)}_F$ computation}
\label{alg:mc_calculus}
\begin{algorithmic}[1]
\REQUIRE Given $F\in S({\cal F}^{\Gamma})$, $N$.
  \STATE {$F'\leftarrow F$}
   \WHILE{no nodes remain in $N$}
  %\begin{enumerate}
    \IF{ $\delta_{F}^+(i)=0$}
    \STATE $\widehat S^F(i)$, $\bar S^F(i)$ and
\begin{equation*}
m_i^{(v,\Gamma)}(F)=v(\bar S^F(i))-
       \sum\limits_{j\in\widehat S^F(i)}v(\bar S^F(j)),
               \qquad\mbox{for all}\enspace i\in N,
\end{equation*} 
   \STATE Eliminate the set of nodes with $\delta_{F'}^+(i)=0$ denoted by $D_{F'}^0$  from the forest $F'$ and from $N$: $F'\leftarrow F'\setminus D_{F'}^0$,  $N \leftarrow N\setminus D_{F'}^0$.
   \ENDIF
   \ENDWHILE
   \end{algorithmic}
   \end{algorithm}

Now, applying  Algorithm \ref{alg:mc_calculus} to every $F\in S({\cal F}^{\Gamma})$, calculate an estimation of the Average Forest measure as follows: 
$$
\hat{AF}(N,v,\Gamma)=\frac{1}{|S({\cal F)}^\Gamma|}
     \sum\limits_{F\in S({\cal F}^{\Gamma})}m^{(v,\Gamma)}(F).
$$

To sum up, Algorithm \ref{alg:AF_estimation} is proposed to estimate the average forest measure of a given organisational situation $(N,v,\Gamma)$  by means of Monte Carlo simulation. 

If $\vert {\cal F}^\Gamma\vert << k$, where $k$ is the required sample size, then Algorithm \ref{alg_AFV_estimation} can be used to calculate the exact average forest measure of the organisational situation. This is achieved by listing all maximal forests in the set  ${\cal F}^\Gamma$, instead of generating a sample of maximal forest $S({\cal F}^{\Gamma})$.

\begin{algorithm}
\caption{[$(N,v,\Gamma,k)$] = $AF$\_estimation($N,v,A,k$)}
\label{alg:AF_estimation}
\begin{algorithmic}[1]
\REQUIRE organisational situation $(N, v,\Gamma)$, sample size $k$
\ENSURE {$\hat{AF} (N,v,\Gamma)$}
 \STATE Initialize $\hat{AF} (N,v,\Gamma)\leftarrow 0$,  for all $i \in N$
\FOR{$r = 1$ to $k$}
   \FORALL{$i\in N$}
       \STATE Initialize $Team\_group(i) \leftarrow \{i\}$
       \STATE Initialize $team\_value(i) \leftarrow v(i)$
       \STATE Initialize $Dep\_teams(i) \leftarrow \emptyset$
       \STATE Initialize $num\_dependent\_teams(i) \leftarrow 0$
  \ENDFOR   
    %\STATE Randomly generate a maximal random forest $F$ of $\Gamma$: 
    \STATE Initialize $F=(N,A_F)$ with $A_F\leftarrow \emptyset$
    \FORALL{$j\in N$ s.t. $\delta^-_\Gamma(j)=1$:}
      \STATE $A_F\leftarrow A_F\cup (i,j)$, with $\{i\}=\hat{P}^\Gamma (j)$
      \STATE $Dep\_teams(i) \leftarrow Dep\_teams(i) \cup \{ j\}$
      \STATE $num\_dependent\_teams(i) \leftarrow num\_dependent\_teams(i) +1$
    \ENDFOR  
   \FORALL{$j\in N$ s.t. $\delta^-_\Gamma(j)>1$:}
     \STATE Select randomly a direct predecessor $i\in \hat{P}^\Gamma (j)$
     \STATE $A_F\leftarrow A_F\cup (i,j)$  
     \STATE $Dep\_teams(i) \leftarrow Dep\_teams(i) \cup \{ j\}$
      \STATE $num\_dependent\_teams(i) \leftarrow num\_dependent\_teams(i) +1$     
   \ENDFOR   
 % \FORALL{$i\in N$:}
 %    \STATE Set $Team\_group(i) \leftarrow \{i\}$
 %    \STATE Set $team\_value(i) \leftarrow v(i)$
 %    \STATE Set $Dep\_teams(i) \leftarrow \hat{S}_F(i)$
 %    \STATE Set $num\_dependent\_teams(i) \leftarrow \delta^+_F(i)=\vert \hat{S}_F(i)\vert$
  % \ENDFOR   
   \STATE $non\_evaluated \leftarrow n$ 
   \WHILE{$non\_evaluated >0$}
      \FORALL{$i\in N$ s.t. $num\_dependent\_teams(i) =0$:}
       \STATE $Team\_group(i) \leftarrow \displaystyle \bigcup_{j\in Dep\_teams(i)} Team\_group(j) \cup Team\_group(i)$ 
       \STATE $team\_value(i) \leftarrow v(Team\_group(i))$
       \STATE $\hat{AF}_i (N,v,\Gamma) \leftarrow \hat{AF}_i (N,v,\Gamma) + \displaystyle \frac{1}{k} \Bigl ( team\_value(i)-\displaystyle \sum_{j\in Dep\_teams(i)}  team\_value(j)\Bigr )$
       \STATE $non\_evaluated \leftarrow non\_evaluated -1 $
    \ENDFOR
  \ENDWHILE
\ENDFOR
\RETURN $\hat{AF}_i (N,v,\Gamma)$, $i\in N$
\end{algorithmic}
\label{alg_AFV_estimation}
\end{algorithm}

%\end{comment}

%\section{Tareas}

%{\color{blue} hablar de la posibilidad de piramidales}

%\begin{itemize}
 %   \item Toda la introduccion, motivacion y trabajos previos.
  %  \item Maximo en vez de media?
   % \item Revisar el Teorema 1 sin circuitos.
    %\item De momento la caracterizacion no es importante, pero si podria serlo para caracterizar lo que pasa en los juegos de unanimidad. Mirar caracterizacion trabajo Conrado Rene tesis. Guardarla de momento?
   % \item Ver la propiedad de monotonia estructural (un seguidor gana menos que un jefe). Depende de interaccion positiva. Exigir juego convexo?
   % \item Jugador inesencial (en el juego y en la estructura)  
   % \item Dejarlo de momento: Incluir un nuevo nodo: Solo se pueden manejar las opciones de incluir un jefe de los nodos de indegree cero y el caso de incluir una leaf. O bien que a un nodo le den un recurso adicional.
   % \item Posibilidad: Descomponer la medida en importancia debida a la creacion de valor y a la posicion en la estructura.
    %\item multi-hierarchical game
    
%\end{itemize}

%

\section{Conclusions}

This paper introduces a novel framework for evaluating leadership and productivity within organisations modeled as directed graphs (digraphs). Through the concept of the \textbf{Average Forest measure (AF)}, we propose a method to assess each individual's potential for leading productive working teams are represented as arborescences within maximal spanning forests of the digraph. Under the assumption that all such configurations are equally probable, the AF leadership measure captures the expected marginal contribution of a player across all team arrangements.

The approach builds on transferable utility (TU) game theory and assumes superadditivity to ensure that team worth increases when coalitions merge. 

We have studied some properties of the measure as nonnegativity, linearity, dummy player and monotonicity. It is remarkable that an specific concept of the dummy property must be considered in this context, to consider not only the capacity of the agent to add value, but his possibility of leading synergistic working teams.

The other important property is the monotonicity of the AF measure with respect to the worth function of teams. We have considered two classical properties and introduced a new one, individual monotonicity, that guarantees the increase of the marginal contributions.

Notably, it is shown that even seemingly inessential arcs can alter leadership scores, emphasizing the sensitivity of outcomes to the specific reporting chains. It is also identified when the changes in the reporting relationships lead to positive, negative or ambiguous effects depending on structural position and also of the superadditivity and convexity of the underlying game.

We further explore how organisational structure impacts overall productivity, which we consider to be the expected value of working teams across all maximal team arrangements. Under this assumption, we have found that having a quasi-strongly connected reporting structure is a necessary and sufficient condition for maximising organisational productivity.

Lastly, due to computational complexity, a sampling-based algorithm is suggested to approximate the AF measure, making the methodology practical for real-world applications.

\end{document}